\title{\vspace{-1cm}Lagrangian aspects of Yudovich theory for 2D Euler}
\date{}
\documentclass[12pt]{article}
\usepackage{authblk}
\usepackage{blindtext}
\usepackage{amsfonts}
\usepackage{amsmath}
\usepackage{amsthm}
\usepackage{graphicx}
\usepackage{hyperref}
\usepackage{slashed}
\usepackage{comment}
\usepackage[margin=2cm]{geometry}
\usepackage{cancel}
\usepackage{collectbox}

\makeatletter

\makeatother
\newtheorem{theorem}{Theorem}
\newtheorem{lemma}{Lemma}

\newtheorem{definition}{Definition}

\author{}
\begin{document}
\author{Theodore D. Drivas and Joonhyun La}
\maketitle
\vspace{-14mm}
\abstract{In this note, we establish Yudovich's existence  and uniqueness result for bounded (as well as mildly unbounded) vorticity weak solution of the two-dimensional incompressible Euler equations. As a biproduct of our proof, we establish some regularity results for the Yudovich solution map as it depends of the initial conditions and the fluid domain.}

\begin{center}
\textit{Dedicated to Prof. Peter Constantin, with admiration and respect.}
\end{center}
\vspace{-2mm}

Let $\Omega$ a bounded domain in the plane, or a two-dimensional manifold without boundary.  Provided the cohomology of $\Omega$ is trivial (see \cite{DE}), or else that $\Omega = \mathbb{T}^2$ or $\mathbb{T}\times[0,1]$, the two dimensional Euler equations in vorticity form for $\omega=\omega(t,x): \mathbb{R}\times \Omega\to \mathbb{R}$ read
\begin{align} \label{vorteqn1}
\partial \omega + u \cdot \nabla \omega &= 0 \qquad\qquad \text{in} \ \   \mathbb{R}\times \Omega\\
u &= K_\Omega[\omega] \qquad \! \text{in} \ \ \mathbb{R}\times \ \Omega\\
u\cdot n&= 0 \qquad\qquad \text{on} \ \  \mathbb{R}\times \partial \Omega\\
\omega &= \omega_0 \qquad \ \ \  \   \text{on} \ \  \{0\}\times  \Omega  \label{vorteqn4}
\end{align}
where $n$ is the normal to the boundary of $\Omega$ (if non-empty), and where  $K = \nabla^\perp \Delta^{-1}$ (with trivial Dirichlet boundary conditions for $\Delta$) is the Biot Savart operator on $\Omega$.  

Equations \eqref{vorteqn1}--\eqref{vorteqn4} show that the vorticity is simply transported within the domain by the velocity field it itself induces.  Namely, if the diffeomorphism $\Phi_t:\Omega \to \Omega$ solving
\begin{equation}	
\frac{d}{d t} X_t = u(X_t, t) \qquad X_0 = {\rm id}
\end{equation}
denotes the flowmap of the velocity field $u$, then   the solution to \eqref{vorteqn1}--\eqref{vorteqn4} ought to be 
\begin{equation}	
\omega(t) =  \omega_0 \circ X_t^{-1}.
\end{equation}
Of course, having such a representation requires that the velocity field be sufficiently regular to at least generate a flow of homeomorphisms, and this regularity must be determined by and consistent with the corresponding vorticity smoothness.  Classical results tell that of $\omega_0\in C^\alpha(\Omega)$ with $\alpha>0$, then there exists a unique solution in the class $u \in C^{1,\alpha}_{x,t}$, in particular the velocity is Lipschitz, which is more than sufficient to justify the above picture.  Indeed, in this classical setting the Lagrangian particle map is a flow of diffeomorphism of some finite regularity.  A major improvement to this result was realized first by Yudovich \cite{Yudovich1}, who proved that if $\omega_0\in L_x^\infty$, then there exists a unique \emph{weak solution} of class $\omega \in L^\infty_tL_x^\infty$.  Here, the velocity is no longer Lipschitz, but bounded vorticity is enough to ensure that the velocity is \emph{log-Lipschitz} and thus there exists a corresponding flow of H\"{o}lder continuous homeomorphisms with decaying regularity.  Subsequently, Yudovich improved this theory to allow slightly unbounded vorticies which essentially have initial $L^p$ norms allowed to grow as $p\to \infty$ just slightly slower than logarithmically \cite{Yudovich2}.  See \eqref{Yspace} for a precise definition.  The point here is that vorticity being in this space ensures that the velocity, while no longer necessarily log-Lipschitz, is still \emph{Osgood continuous} and thus by Osgood's  theorem generates a flow of homeomorphisms.   The Lagrangian picture persists to this setting, and coherent singular structures are know to exist \cite{DEL23}.

Although the Lagrangian picture is implied for Yudovich's theory, it is not how Yudovich constructed his solutions of proved uniqueness.  Here rather he relied on some involved quasi-Eulerian scheme and clever energy estimates. See also \cite{CS24}. To the best of our knowledge, we are not aware of a fully Lagrangian proof: for example, the formulation in \cite{MP} measured the $L^1$-difference of flows to prove uniqueness, yet it did not regard Euler equation as the ODE in the space of volume-preserving homeomorphisms. Also, we would like to highlight Lagrangian-Eulerian formulation of \cite{C}: here, all variables are Lagrangian, and the regularity is considered in $C^{1,\alpha}$ space for fluid velocity, where Eulerian formulation faces a regularity issue. Yet, as the formulation intends to and does provide a unified framework for well-posedness of a wide range of models which involve velocity gradient in their evolution equations, it comes back to Eulerian setting to formulate velocity and velocity gradient operators. On the other hand, in Yudovich theory, velocity gradient of velocity field is not needed. Our goal in this note is to provide a simple, self-contained and fully Lagrangian proof of Yudovich's result.  As a result of this proof, we can gain additional information about the structure of the theory.  In particular, we will prove that the Lagrangian solution map 
\begin{equation}
X_t = S_{t,\omega_0, \Omega}(X_0)
\end{equation}
is itself continuous at fixed times as a function of the initial data and domain, with regularity deteriorating at the same rate as that of the particle flowmap. Our main result is
\begin{theorem}
Let $\Omega$ be a $C^2$ bounded domain in the plane, or a surface without boundary, $\mathcal{M}$ be the space of measure-preserving homeomorphisms on $\Omega$  endowed with the $L^\infty$ topology and $L^1\cap\mathbf{Y}_{\Theta}$ be a Yudovich space  \eqref{Yspace}. Then the Euler solution map $S_{t,\omega_0, \Omega}:\mathcal{M}\rightarrow \mathcal{M} $ is well defined, $C^\infty$ smooth in time,  and  continuous  in its dependence on the initial data and domain (with a time-dependent modulus of continuity depending on $\Theta$).
\end{theorem}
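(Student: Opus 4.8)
The plan is to recast the Euler system entirely in Lagrangian variables and treat it as an autonomous ODE on $\mathcal{M}$. Since the vorticity is transported, $\omega(t)=\omega_0\circ X_t^{-1}$, the Biot--Savart law closes the flow equation into
$$\frac{d}{dt}X_t = F[X_t], \qquad F[X] := \big(K_\Omega[\omega_0\circ X^{-1}]\big)\circ X, \qquad X_0=\mathrm{id}.$$
The single structural fact driving the whole argument is that, because each $X_t\in\mathcal{M}$ is measure-preserving, the advected vorticity $\omega_0\circ X_t^{-1}$ is equimeasurable with $\omega_0$ and hence stays in $L^1\cap\mathbf{Y}_\Theta$ with norm \emph{independent of} $t$; thus every velocity estimate is uniform in time and the problem reduces to the single map $F$. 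The analytic input is a pair of Biot--Savart estimates on $\Omega$: a uniform bound $\|K_\Omega[\omega]\|_{L^\infty}\lesssim\|\omega\|_{L^1\cap\mathbf{Y}_\Theta}$, and an Osgood spatial modulus $|K_\Omega[\omega](z)-K_\Omega[\omega](z')|\le\|\omega\|_{L^1\cap\mathbf{Y}_\Theta}\,\mu(|z-z'|)$ (with $\mu(r)\sim r\log(1/r)$ in the bounded case). Changing variables by measure-preservation turns $K_\Omega[\omega_0\circ X^{-1}]-K_\Omega[\omega_0\circ Y^{-1}]=\int_\Omega[K_\Omega(z,X(a))-K_\Omega(z,Y(a))]\,\omega_0(a)\,da$, and integrating the kernel difference converts $\|X-Y\|_{L^\infty}$ into an Osgood modulus; combined with the spatial modulus this yields $\|F[X]-F[Y]\|_{L^\infty}\le\varpi(\|X-Y\|_{L^\infty})$ with $\varpi$ Osgood. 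This is the Lagrangian analogue of log-Lipschitz regularity.

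Next I would show $S$ is well defined. For existence I mollify the Biot--Savart kernel to $K_\Omega^\epsilon$, so the regularized field $F^\epsilon$ is Lipschitz on $\mathcal{M}$ and Picard iteration yields a unique flow $X_t^\epsilon\in\mathcal{M}$; the uniform-in-$\epsilon$ bounds survive mollification, giving spatial equicontinuity and a uniform bound on $\dot X^\epsilon$, so Arzel\`a--Ascoli extracts a limit $X_t$ solving the ODE on compact time intervals. Measure-preservation passes to the limit, and running the construction backward in time produces a continuous inverse, so $X_t$ is a measure-preserving homeomorphism. Uniqueness is then immediate: if $X_t,\tilde X_t$ solve the ODE, then $\delta(t):=\|X_t-\tilde X_t\|_{L^\infty}$ satisfies $\dot\delta\le\varpi(\delta)$ with $\delta(0)=0$, and Osgood's lemma forces $\delta\equiv0$. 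This establishes well-definedness and the semigroup property.

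For smoothness in time I differentiate the Lagrangian ODE and exploit the material-derivative structure: writing $D_t=\partial_t+u\cdot\nabla$ one has $\tfrac{d}{dt}(f\circ X_t)=(D_tf)\circ X_t$, so that $\tfrac{d^n}{dt^n}X_t=(D_t^{\,n-1}u)\circ X_t$. Since $D_tu=-\nabla p$ with $\nabla p=\nabla(-\Delta)^{-1}\partial_i\partial_j(u_iu_j)$, the pressure gradient again lies in the bounded, Osgood-continuous class; iterating, each $D_t^{\,n-1}u$ is controlled by the same Calder\'on--Zygmund operators applied to products of the uniformly bounded velocity, hence stays bounded. Therefore each time derivative of $X_t$ is a bounded field composed with $X_t$, giving $X_t\in C^\infty$ in $t$ valued in the $L^\infty$ topology.

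Finally, continuity in the data and domain follows from the same Osgood comparison principle applied to differences of the governing ODEs. For the initial configuration and vorticity one compares $F_{\omega_0}$ and $F_{\tilde\omega_0}$ (their difference controlled by $\|\omega_0-\tilde\omega_0\|$ via Biot--Savart stability) together with the initial conditions, then integrates $\dot\delta\le\varpi(\delta)+(\text{data error})$ through Osgood's lemma to obtain a $t$- and $\Theta$-dependent modulus of continuity. For the domain I would pull back the flow on a perturbed domain $\Omega'$ to the fixed $\Omega$ via a near-identity $C^2$ diffeomorphism and estimate the difference of the pulled-back Biot--Savart operators using stability of the Green's function under domain variation. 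I expect this last step---the quantitative Green's-function stability under domain perturbation and its compatibility with the merely Osgood (non-Lipschitz) control---to be the main obstacle; everything else reduces to the single Osgood estimate on $F$ together with the equimeasurability of the transported vorticity.
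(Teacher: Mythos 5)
Your core construction---Euler as an autonomous ODE $\dot X_t=F[X_t]$ on $\mathcal{M}$ with $F$ Osgood continuous thanks to equimeasurability of the transported vorticity---is exactly the viewpoint the paper advertises, but the mechanism you use to solve it differs from theirs. The paper never works with the autonomous field $F$ directly: it introduces a two-level operator $S[X]$ (freeze the input path $X(\cdot)$, form $v_{X(t)}(y)=\int_\Omega K(y,X(b,t))\omega_0(b)\,db$, and let $S[X]$ be the flow of this frozen field), and proves a Picard lemma showing that the iteration $X^{k+1}=S[X^k]$ converges in a complete metric space under a mere Osgood quasi-Lipschitz estimate---no Lipschitz regularization and no compactness. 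This buys a constructive scheme whose iterates lie in $C([0,T];\mathcal{M})$ automatically (once one knows flows of divergence-free Osgood fields are measure-preserving homeomorphisms, their Lemma \ref{volpres}), and the same quasi-Lipschitz estimate then yields uniqueness and data-stability for free. Your route (regularize, solve a Lipschitz problem, extract a limit by Arzel\`a--Ascoli using the uniform spatial modulus, prove uniqueness by Osgood) is a legitimate alternative, with two repairs needed: Picard iterates of the form $\mathrm{id}+\int_0^t F^\epsilon[X^n(s)]\,ds$ leave $\mathcal{M}$, so the regularized problem must be solved in $C(\overline\Omega;\mathbb{R}^2)$ and measure-preservation of the fixed point proven afterwards; and on a bounded domain you should mollify the \emph{vorticity} (as the paper does), not the kernel, since a mollified kernel is in general neither divergence-free nor tangent to $\partial\Omega$, so your approximate flows need not preserve measure or even remain in $\Omega$.

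Two clauses of the theorem are not actually proved by your proposal. First, $C^\infty$ smoothness in time: the induction ``each $D_t^{n-1}u$ is controlled by Calder\'on--Zygmund operators applied to products of the uniformly bounded velocity, hence stays bounded'' does not work, because Calder\'on--Zygmund operators do not map $L^\infty$ to $L^\infty$. The first step is fine ($\Delta p=-\partial_ju_i\partial_iu_j\in L^p$ for all finite $p$, so $\nabla p\in L^\infty$), but already $D_t^2u$ produces terms like $u\cdot\nabla\nabla p$ and commutators $[u\cdot\nabla,\nabla(-\Delta)^{-1}\partial_i\partial_j]$ involving the \emph{unbounded} $\nabla u$, and boundedness of all higher material derivatives is a genuinely delicate theorem (Serfati for $\omega_0\in L^\infty$, Sueur for the Yudovich class), resting on special commutator cancellations. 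The paper handles this clause by citation, not proof; your sketch, written out, would collapse at the second derivative. Second, domain dependence: you correctly identify the strategy (pull back by a $C^2$ diffeomorphism $\Phi$ and compare Biot--Savart operators) but stop at the step you yourself flag as the main obstacle. The paper closes it concretely: the difference of the two pulled-back stream functions solves the Dirichlet problem
\begin{equation*}
\Delta(\psi-\tilde\psi)=\partial_k\left[\left(\partial_k^\perp\Phi_i\,\partial_j^\perp\Phi_i-\delta_{jk}\right)\partial_j\tilde\psi\right]\ \text{in }\Omega_1,\qquad \psi-\tilde\psi=0\ \text{on }\partial\Omega_1,
\end{equation*}
and elliptic regularity gives $\|\nabla^\perp(\psi-\tilde\psi)\|_{L^\infty}\le C_q\|\nabla\Phi-\mathbb{I}\|_{C^1}\,C(\|\nabla\Phi\|_{C^1})\,\|\omega_0\|_{L^q}$, which feeds into the same Osgood comparison as the data-stability estimate. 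Without this (or an equivalent quantitative Green's-function stability estimate), the continuity-in-domain claim remains open in your argument.
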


For more precise statement about existence, uniqueness and dependence on initial data see Theorem \ref{theorem1}. For dependence on the domain, see Theorem \ref{theorem2}.   Smoothness in time is proved by Serfati \cite{Serfati} in the bounded vorticity case and generalized to the full Yudovich class by Sueur \cite{Sueur}. The particular modulus of continuity of the solution map depends on $\Theta$.  In that case, the  modulus for the solution map comes from inverting the function $\nu_\Theta$ given in  \eqref{numod}, e.g., $\nu_\Theta^{-1}( e^{ct} \nu_\Theta(\text{difference of parameters}))$.  In the special case of bounded vorticity, the modulus becomes 
a time-dependent H\"{o}lder modulus. Specifically
\begin{equation}
\|S_{t,\omega_0, \Omega}(X) - S_{t,\omega_0', \Omega}(X)\|_{L^\infty(\Omega)} \leq C  \|\omega_0-\omega_0'\|_{L^\infty(\Omega)}^{e^{-c t}}
\end{equation}
for appropriate constants $c,C>0$ depending only on $\|X\|_{L^\infty(\Omega)}$ and $\|\omega_0\|_{L^\infty(\Omega)}$, $\|\omega_0\|_{L^\infty(\Omega)}$.   In terms of the domain, we consider two related by an arbitrary twice-differentiable diffeomorphism $\Phi: \Omega\to \Omega' := \Phi(\Omega)$.  See \cite{L15,LZ19} for rougher domains. The solution map obeys:
\begin{equation}
\|S_{t,\omega_0, \Omega}(X) -  \Phi^{-1}\circ S_{t,\omega_0\circ \Phi^{-1}, \Phi(\Omega)}(X)\circ \Phi\|_{L^\infty(\Omega)} \leq C  \|\Phi - {\rm id}\|_{C^2(\Omega)}^{e^{-c t}},
\end{equation}
again for appropriate constants $c,C>0$. 
Note that these estimates correlates  precisely the well known deteriorating H\"older estimate for the regularity of the flowmap as a function of label.  For corresponding continuity properties of the \emph{Eulerian} solution map,  which are necessarily of a somewhat weaker nature, see the work of Constantin et al. \cite{CDE}.  See also  \cite{CW}. The reason for this estimate is that, viewed in Lagrangian coordinates, Euler becomes an ODE, parametrized by $\omega_0$ and $\Omega$, on the space $\mathcal{M}$ of measure-preserving homeomorphisms 
\begin{equation}
\dot{X}_t = V_{\omega_0,\Omega}[X_t]
\end{equation}
where, if $\omega_0\in L^1\cap\mathbf{Y}_{\Theta}$, then the ``vector field" $V_{\omega_0,\Omega}$ is Osgood continuous with Lipschitz dependence on the parameters.

Extensions to Navier-Stokes are possible due to the stochastic Lagrangian formulation of Constantin-Iyer \cite{CI}, see also \cite{CDE, CCS24}.

\section{Preliminaries}
We start with basic lemmas. Let $\Omega$ be $\mathbb{R}^2$, $\mathbb{T}^2$, or a bounded domain in $\mathbb{R}^2$ with a smooth boundary. Recall the following estimates on the Biot-Savart kernel
\begin{lemma}[\cite{MP}] \label{Kernelestim} Let $\Delta$ denote Dirichlet Laplacian, and $K = \nabla^\perp \Delta^{-1}$. Then
\begin{enumerate}
\item $| K  (x,y) | \le \frac{C}{|x-y|^{1} }$ for some constant $C$,
\item for $(x_1, y), (x_2, y)$ satisfying
\begin{equation} \label{yfar}
|x_1 - x_2 | \le \frac{1}{2} \max (|x_1 - y|, |x_2 - y|),
\end{equation}
then,  for some  constant $C$, we have
\begin{equation}
|K (x_1, y) - K  (x_2, y) | \le \frac{C |x_1 - x_2 | }{ (|x_1 - y| + |x_2 - y| )^{2}  }
\end{equation}
 Similarly, for $(x, y_1), (x, y_2)$ satisfying
\begin{equation} \label{xfar}
|y_1 - y_2| \le \frac{1}{2} \max (|x- y_1|, |x- y_2 | ),
\end{equation}
then,  for some  constant $C$, we have
\begin{equation}
|K (x, y_1) - K  (x, y_2) | \le \frac{C |y_1 - y_2 | }{(|x - y_1| + |x - y_2|)^{2} }.
\end{equation}
\end{enumerate}
\end{lemma}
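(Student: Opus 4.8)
The plan is to split the kernel into a universal singular part and a domain-dependent smooth correction, dispatch the singular part by explicit computation, and absorb the correction using classical Green's function estimates. First I would write the Dirichlet Green's function as $G(x,y) = \frac{1}{2\pi}\log|x-y| + h(x,y)$, where, for each fixed $y\in\Omega$, the regular part $h(\cdot,y)$ is harmonic in $\Omega$ with boundary data $-\frac{1}{2\pi}\log|\cdot - y|$ on $\partial\Omega$ (on $\mathbb{R}^2$ one has $h\equiv 0$, and on $\mathbb{T}^2$ the correction is smooth and periodic). Then $K(x,y) = \nabla_x^\perp G(x,y) = K_0(x,y) + R(x,y)$, with the free Biot--Savart kernel $K_0(x,y) = \frac{1}{2\pi}\frac{(x-y)^\perp}{|x-y|^2}$ and the remainder $R(x,y) = \nabla_x^\perp h(x,y)$.

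For bound (1) the free kernel is immediate, since $|K_0(x,y)| = \frac{1}{2\pi|x-y|}$. The remainder I would control by the classical gradient estimate $|\nabla_x h(x,y)|\le C/|x-y|$, valid on a $C^2$ bounded domain; this follows from the maximum principle together with interior and boundary gradient estimates for harmonic functions (for model geometries such as half-planes and disks one may instead use the method of images, which makes the bounds completely explicit). Summing the two contributions yields $|K(x,y)|\le C/|x-y|$.

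For bound (2) I would treat the free kernel first. A direct computation gives $|\nabla_x K_0(z,y)| \le C/|z-y|^2$, and the geometric hypothesis \eqref{yfar} is exactly what keeps the whole segment $[x_1,x_2]$ away from the singularity: assuming without loss of generality $|x_1-y|\ge|x_2-y|$, every point $z = x_1 + t(x_2-x_1)$ with $t\in[0,1]$ satisfies $|z-y| \ge |x_1-y| - |x_1-x_2| \ge \tfrac12|x_1-y| \ge \tfrac14(|x_1-y|+|x_2-y|)$. The fundamental theorem of calculus then yields
\[
|K_0(x_1,y) - K_0(x_2,y)| \le |x_1-x_2|\sup_{t\in[0,1]}|\nabla_x K_0(x_1+t(x_2-x_1),y)| \le \frac{C|x_1-x_2|}{(|x_1-y|+|x_2-y|)^2}.
\]
The same mean value argument applied to $R$, using the second-derivative estimate $|\nabla_x^2 h(x,y)|\le C/|x-y|^2$, gives the corresponding bound for the remainder. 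The $y$-variation inequalities (under \eqref{xfar}) follow identically after exchanging the roles of $x$ and $y$, invoking the symmetry $G(x,y)=G(y,x)$ of the Dirichlet Green's function.

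The only genuine difficulty lies in the remainder $R$: unlike the explicit free kernel, the boundary correction $h$ and its derivatives can degenerate as $x$ or $y$ approaches $\partial\Omega$, so the uniform bounds $|\nabla_x h|\le C/|x-y|$ and $|\nabla_x^2 h|\le C/|x-y|^2$ must be established with care. This is precisely where the $C^2$ regularity of the domain enters, through Schauder-type boundary estimates (or explicit image constructions in model geometries); the free-kernel contributions, by contrast, are elementary.
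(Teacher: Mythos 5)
Your decomposition $K=K_0+\nabla_x^\perp h$ is a legitimate strategy, and it is genuinely different from the proof the paper relies on: the paper offers no argument, citing Marchioro--Pulvirenti, who (for smooth simply connected planar domains) derive these bounds from the conformal representation $G(x,y)=\tfrac{1}{2\pi}\log\big(|T(x)-T(y)|/|1-T(x)\overline{T(y)}|\big)$, where $T:\Omega\to D$ is a Riemann map which, by Kellogg's theorem, extends to the closure with nonvanishing, H\"older-continuous derivative; all four estimates then follow by explicit differentiation. Your route instead runs through elliptic estimates for the regular part $h$, and this is exactly where it has a genuine gap. The bounds $|\nabla_x h(x,y)|\le C/|x-y|$ and $|\nabla_x^2 h(x,y)|\le C/|x-y|^2$ do \emph{not} follow from ``the maximum principle together with interior and boundary gradient estimates'': the boundary data $-\tfrac{1}{2\pi}\log|\cdot-y|$ blows up as $y\to\partial\Omega$, so the maximum principle only gives $\sup_x|h(x,y)|\le C(1+|\log \mathrm{dist}(y,\partial\Omega)|)$, and interior/boundary estimates then yield bounds of the form $C(1+|\log \mathrm{dist}(y,\partial\Omega)|)/\mathrm{dist}(x,\partial\Omega)$, which is not $C/|x-y|$ in the critical regime where $x$ and $y$ approach the same boundary point. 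In that regime the claimed bound encodes a cancellation --- visible in the half-plane, where $\nabla_x h(x,y)=-\tfrac{1}{2\pi}(x-\bar y)/|x-\bar y|^2$ and one uses $|x-\bar y|\ge |x-y|$ --- that no soft argument captures; one genuinely needs Widman-type Green's function estimates (or reflection/barrier constructions, or the conformal map). Since these estimates \emph{are} the content of the lemma, asserting them is circular rather than a proof. Relatedly, your $y$-variation bound actually requires the mixed estimate $|\nabla_x\nabla_y h(x,y)|\le C/|x-y|^2$; the symmetry $G(x,y)=G(y,x)$ does not reduce this to the pure bound $|\nabla_x^2 h|\le C/|x-y|^2$ you invoke, so it is an additional estimate of the same (nontrivial) type.

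There is a second, smaller gap in the mean value step for the remainder. Unlike $K_0$, which is defined on all of $\mathbb{R}^2\setminus\{y\}$, the function $h(\cdot,y)$ lives only on $\Omega$, and $\Omega$ is not assumed convex; under \eqref{yfar} the straight segment $[x_1,x_2]$ can exit $\Omega$ (e.g.\ in a horseshoe-shaped domain with $x_1,x_2$ on opposite sides of the notch), in which case $\sup_{t\in[0,1]}|\nabla_x^2 h(x_1+t(x_2-x_1),y)|$ is meaningless. The fix is to replace the segment by a path inside $\Omega$ of length $\le C|x_1-x_2|$ (chord-arc/quasiconvexity of $C^2$ domains) that moreover stays at distance comparable to $|x_1-y|+|x_2-y|$ from $y$; both properties need to be stated and justified, since a priori the detour could pass near the singularity. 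The same remark applies to the $y$-differences under \eqref{xfar}.
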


We now introduce the notion of an Osgood modulus  of continuity

\begin{definition} For a number $a$, $0 < a \le 1$, a modulus of continuity is an increasing nonzero continuous function $\mu: [0, a] \rightarrow \mathbb{R}_{\ge 0}$ such that $\mu(0) = 0$. 
We call $\mu$ an Osgood modulus if
\begin{equation*}
\int_0 ^a \frac{dr}{\mu(r) } = \infty.
\end{equation*} 
\end{definition}

\begin{lemma}[Osgood lemma] \label{Osgood} 
Let $\rho: [t_0, T] \rightarrow [0, a]$ be a measurable function, $\gamma : [t_0, T] \rightarrow \mathbb{R}_{>0}$ be locally integrable, and $\mu: [0, a] \rightarrow \mathbb{R}_{>0}$ be an Osgood modulus of continuity. If for some $c \ge 0$ the following holds,
\begin{equation}
\rho(t) \le c (1+(t- t_0) ) + \int_{t_0} ^t \gamma (t') \mu (\rho(t') ) dt', t \in [t_0, T],
\end{equation}
then the following holds: if $c>0$, then for $t \in [t_0, T]$, 
\begin{equation}
-\mathcal{M} (\rho(t) ) + \mathcal{M} (c) \le \int_{t_0 } ^t \gamma(t') dt', \mathcal{M} (x) = \int_x ^a \frac{dr}{\mu(r) },
\end{equation}
and if $c=0$, then $\rho = 0$. 
\end{lemma}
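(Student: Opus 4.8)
The plan is to run the classical majorization argument: I replace the integral inequality by a differential inequality for the pointwise-smallest majorant of $\rho$, divide through by $\mu$ so that the resulting left-hand side is recognized as an exact time derivative of $\mathcal{M}$, and then integrate; the Osgood condition $\int_0^a dr/\mu(r) = \infty$ is exactly what promotes the resulting bound into rigidity when $c=0$. Concretely, I first introduce the majorant
\[
R(t) := c\,(1+(t-t_0)) + \int_{t_0}^t \gamma(t')\,\mu(\rho(t'))\,dt',
\]
so that by hypothesis $0 \le \rho(t) \le R(t)$ with $R(t_0)=c$. Since $\rho$ is measurable and valued in $[0,a]$, $\mu$ is continuous, and $\gamma$ is locally integrable, the product $\gamma\,\mu(\rho)$ is locally integrable, whence $R$ is absolutely continuous and, for a.e.\ $t$,
\[
R'(t) = c + \gamma(t)\,\mu(\rho(t)) \le c + \gamma(t)\,\mu(R(t)),
\]
the inequality using that $\mu$ is nondecreasing together with $\rho(t)\le R(t)$. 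A small technical point to dispatch here is that $R$ could a priori exceed $a$, where $\mu$ is not defined; this is harmless and is handled by extending $\mu$ to $[0,\infty)$ as an increasing positive function (e.g.\ constant beyond $a$), which alters neither the hypotheses nor the conclusion because those involve only $\rho(t)\in[0,a]$ and the base point $c$.

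Next, assume $c>0$, so $R(t)\ge c>0$ and $\mu(R(t))>0$ throughout. Dividing the differential inequality by $\mu(R(t))$ and using that $\mathcal{M}(x)=\int_x^a dr/\mu(r)$ satisfies $\frac{d}{dt}\mathcal{M}(R(t)) = -R'(t)/\mu(R(t))$, I obtain for a.e.\ $t$ the pointwise relation
\[
-\frac{d}{dt}\,\mathcal{M}(R(t)) = \frac{R'(t)}{\mu(R(t))} \le \gamma(t) + \frac{c}{\mu(R(t))}.
\]
Integrating from $t_0$ to $t$, using $\mathcal{M}(R(t_0))=\mathcal{M}(c)$, and then transferring from $R$ back to $\rho$ via the monotonicity of $\mathcal{M}$ (which is decreasing, so $\rho\le R$ gives $\mathcal{M}(\rho(t))\ge \mathcal{M}(R(t))$), yields an estimate of the announced form $-\mathcal{M}(\rho(t))+\mathcal{M}(c)\le \int_{t_0}^t \gamma(t')\,dt'$.

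Finally, for the case $c=0$ the hypothesis reads $\rho(t)\le \int_{t_0}^t \gamma\,\mu(\rho)$, which in particular implies the $c>0$ hypothesis for \emph{every} $c>0$. Applying the bound just obtained and rearranging gives $\mathcal{M}(\rho(t)) \ge \mathcal{M}(c) - \int_{t_0}^t \gamma(t')\,dt'$ for all $c>0$; letting $c\to 0^+$ and invoking the Osgood divergence $\mathcal{M}(0^+)=\int_0^a dr/\mu(r)=+\infty$ forces $\mathcal{M}(\rho(t))=+\infty$, and since $\mathcal{M}(x)$ is finite for every $x>0$ (as $\mu>0$ and continuous on $(0,a]$), this is possible only if $\rho(t)=0$; as $t$ is arbitrary, $\rho\equiv 0$. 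I expect the genuine content, rather than an obstacle, to be concentrated in this last step: the technical work (verifying absolute continuity so as to apply the fundamental theorem of calculus to a merely measurable $\rho$, and tracking the inhomogeneous contribution when dividing by $\mu(R)$) is routine, whereas the qualitative jump from ``bounded'' to ``identically zero'' rests entirely on the divergence of $\int_0 dr/\mu$, and this is precisely the mechanism that later converts a Gr\"onwall-type stability estimate into genuine uniqueness of the Lagrangian flow.
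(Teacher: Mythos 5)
The paper states this lemma without proof (it is quoted as a classical fact), so there is no in-paper argument to compare against; judged on its own terms, your proposal has a genuine gap at the integration step. Your majorant computation is correct up to and including the differential inequality $-\tfrac{d}{dt}\mathcal{M}(R(t)) \le \gamma(t) + c/\mu(R(t))$, but the next assertion --- that integrating ``yields an estimate of the announced form'' --- is exactly where the proof breaks: integration leaves the term $c\int_{t_0}^{t} dt'/\mu(R(t'))$, which is not negligible, and no treatment of it can recover the stated conclusion, because the stated conclusion is itself false under the stated hypothesis. Indeed, take $\mu(r)=r$ (an Osgood modulus in the paper's sense), $\gamma\equiv 1$, $t_0=0$, and $\rho(t)=2c e^{t}-c$ with $c$ small enough that $\rho$ stays in $[0,a]$. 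Then $c(1+t)+\int_0^t \mu(\rho(s))\,ds = 2ce^{t}-c=\rho(t)$, so the hypothesis holds with equality, while the conclusion would read $-\mathcal{M}(\rho(t))+\mathcal{M}(c)=\log\left(\rho(t)/c\right)=\log\left(2e^{t}-1\right)\le t$, i.e.\ $2e^{t}-1\le e^{t}$, which fails for every $t>0$. So the leftover term is not ``routine'': it is precisely the price of the inhomogeneous $c\,(t-t_0)$ in the hypothesis. (For the classical statement, with hypothesis $\rho(t)\le c+\int_{t_0}^t\gamma\,\mu(\rho)$, your argument closes perfectly, since then $R'=\gamma\,\mu(\rho)$ and there is no leftover $c$.)

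Two repairs are available, and the choice matters for your $c=0$ step. (i) Since $R(t)\ge c$ and $\mu$ is increasing, $c/\mu(R(t))\le c/\mu(c)$, which gives the correct conclusion $-\mathcal{M}(\rho(t))+\mathcal{M}(c)\le\int_{t_0}^t\gamma\,dt' + (t-t_0)\,c/\mu(c)$. This suffices for every application in the paper, because the moduli $\mu_\Theta$ of \eqref{Mutheta} satisfy $c/\mu_\Theta(c)=1/\left(\log(1/c)\,\Theta(\log(1/c))\right)\le C$; but for a general Osgood modulus the extra term ruins your limiting argument for $c=0$: for $\mu(r)=r^2$ (Osgood under the paper's definition) one computes $\mathcal{M}(c)-(t-t_0)\,c/\mu(c)=(1-(t-t_0))/c - 1/a$, which does not tend to $+\infty$ when $t-t_0\ge 1$. (ii) Cleaner: on $[t_0,T]$ majorize $c(1+(t-t_0))\le c':=c(1+(T-t_0))$, so the hypothesis implies the classical one with constant $c'$; your own computation then yields $-\mathcal{M}(\rho(t))+\mathcal{M}\left(c(1+(T-t_0))\right)\le\int_{t_0}^t\gamma\,dt'$, which is correct, serves all the paper's purposes, and under which your $c\to 0^+$ argument goes through verbatim since $\mathcal{M}\left(c(1+(T-t_0))\right)\to+\infty$. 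Alternatively, prove the $c=0$ case directly: if $R(t_1)>0$, let $\tau$ be the last zero of $R$ before $t_1$ and integrate $-\tfrac{d}{dt}\mathcal{M}(R)\le\gamma$ over $(\tau,t_1]$; the divergence of $\mathcal{M}$ at $0^+$ gives a contradiction without any limiting in $c$.
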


\begin{lemma}\label{Picard} 
Suppose that $(\mathcal{X},d)$ is a complete metric space with some $\mathrm{Id} \in \mathcal{X}$, $\mu$ an Osgood modulus of continuity, and that $S: C([0, T]; \mathcal{X}) \rightarrow C([0, T];X)$ satisfies the following estimate: for every $t \in [0, T]$,
\begin{equation} \label{Quasi_Lipschitz}
\begin{split}
d(S[X ](t) , S[Y] (t) ) &\le d(S[X](0), S[Y](0) ) +   \int_0 ^t C (  \mu (d(X(t'), Y(t') )   ) + \mu (d(S[x](t'), S[Y] (t' ) ) )) dt'.
\end{split}
\end{equation} 
Then $S$ has a unique fixed point in $C([0, T]; \mathcal{X} ) \cap \{ X(t) : X(0) = \mathrm{id} \}$.
\end{lemma}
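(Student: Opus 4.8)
The plan is to obtain uniqueness directly from the Osgood lemma and existence from a Picard-type iteration, with the absence of a genuine contraction compensated by monitoring a monotone ``tail-diameter'' of the iterates. Throughout I work on the closed (hence complete) subspace $\mathcal{C} := \{ X \in C([0,T];\mathcal{X}) : X(0) = \mathrm{id} \}$. Since a fixed point $X = S[X]$ lying in the prescribed set obeys $X(0) = S[X](0) = \mathrm{id}$, and this is what holds in the application, I take $S$ to preserve the base point, $S[X](0)=\mathrm{id}$, so that $S$ maps $\mathcal{C}$ into itself and the boundary term $d(S[X](0),S[Y](0))$ in \eqref{Quasi_Lipschitz} vanishes there, leaving the pure integral inequality
\begin{equation*}
d(S[X](t), S[Y](t)) \le C \int_0^t \big( \mu(d(X(t'),Y(t'))) + \mu(d(S[X](t'),S[Y](t'))) \big)\, dt'.
\end{equation*}
Because in the application $\mathcal{X}=\mathcal{M}$ has diameter bounded by $\mathrm{diam}(\Omega)$, I may (extending $\mu$ by the constant $\mu(a)$ beyond $a$, which preserves monotonicity and the Osgood property) assume every distance below lies in the domain of $\mu$, so that Lemma~\ref{Osgood} always applies.

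For uniqueness, let $X,Y \in \mathcal{C}$ both be fixed points and put $\rho(t):=d(X(t),Y(t))$. Using $X=S[X]$ and $Y=S[Y]$ in the reduced inequality gives $\rho(t) \le 2C\int_0^t \mu(\rho(t'))\,dt'$, which is the hypothesis of Lemma~\ref{Osgood} with $c=0$, $\gamma\equiv 2C$, $t_0=0$. Hence $\rho\equiv 0$, i.e. $X=Y$.

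For existence, set $X^0(t)\equiv \mathrm{id}$, $X^{n+1}:=S[X^n]\in\mathcal{C}$, and define the tail-diameter $\Delta_N(t):=\sup_{k,l\ge N} d(X^k(t),X^l(t))$, which is measurable (a countable supremum of continuous functions), uniformly bounded, and nonincreasing in $N$. For $k,l\ge N+1$ I apply the reduced inequality with $X=X^{k-1}$, $Y=X^{l-1}$ and bound the integrand using $d(X^{k-1},X^{l-1})\le \Delta_N$ together with $d(X^k,X^l)\le \Delta_{N+1}\le \Delta_N$; taking the supremum over such $k,l$ and using monotonicity of $\mu$ yields
\begin{equation*}
\Delta_{N+1}(t) \le 2C \int_0^t \mu(\Delta_N(t'))\, dt'.
\end{equation*}
Passing to the pointwise decreasing limit $\Delta_\infty:=\lim_N \Delta_N$, dominated convergence gives $\Delta_\infty(t)\le 2C\int_0^t \mu(\Delta_\infty(t'))\,dt'$, so Lemma~\ref{Osgood} with $c=0$ forces $\Delta_\infty\equiv 0$. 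Since $\mu$ is continuous with $\mu(0)=0$, a second use of dominated convergence improves this to $\sup_t \Delta_{N+1}(t) \le 2C\int_0^T \mu(\Delta_N(t'))\,dt' \to 0$, so $\{X^n\}$ is uniformly Cauchy and converges in $\mathcal{C}$ to some $X$. Finally, the reduced inequality makes $S$ continuous on $\mathcal{C}$: for $Y_k\to Y$ one gets $d(S[Y_k](t),S[Y](t)) \le CT\,\mu(\sup_t d(Y_k,Y)) + C\int_0^t \mu(d(S[Y_k],S[Y]))\,dt'$, and Lemma~\ref{Osgood} drives the left side to $0$ because $\mathcal{M}(c)\to\infty$ as $c\to 0$. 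Passing to the limit in $X^{n+1}=S[X^n]$ then gives $X=S[X]$.

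The main obstacle is the existence step: with only an Osgood (not Lipschitz) modulus there is no contraction, so the naive Picard estimate does not close. The device resolving this is the monotone tail-diameter $\Delta_N$, whose recursion survives the passage to the limit and feeds into the Osgood lemma; the two supporting technical points are the interchange of the decreasing limit with the time integral via dominated convergence, and the preliminary normalization guaranteeing that all distances lie in the domain of $\mu$.
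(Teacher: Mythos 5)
Your proposal is correct and takes essentially the same approach as the paper: Picard iteration started at the identity, a tail-supremum of the iterates satisfying a recursion that closes into an Osgood-type integral inequality in the limit, and the Osgood lemma to conclude the sequence is Cauchy (with uniqueness directly from \eqref{Quasi_Lipschitz} and Osgood). The differences are cosmetic: you use the monotone tail diameter $\sup_{k,l\ge N} d(X^k,X^l)$ with dominated convergence where the paper uses $\rho^k(t)=\sup_n\sup_{t'\le t} d(X^{n+k}(t'),X^k(t'))$ with a $\limsup$ and reverse Fatou, and you make explicit the continuity of $S$ needed to pass to the limit in $X^{n+1}=S[X^n]$, which the paper leaves implicit.
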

\begin{proof}
We employ the Picard scheme:
\begin{equation}
X^{k+1} = S[X^k], \qquad X^{0} =  \mathrm{id} 
\end{equation}
Then we have
\begin{equation}
d(X^{n+k} (t),  X^k(t) ) \le  C  \int_0 ^t \mu (d( X^{n+k} (t') , X^{k} (t') ) ) + \mu (d( X^{n+k-1} (t') , X^{k-1} (t') ) ) dt',
\end{equation}
and taking first supremum over $n$, then using non-decreasing property of $\mu$, and finally by defining
\begin{equation}
\rho^k (t) := \sup_n \sup_{0 \le t' \le t} d( X^{n+k} (t') , X^{k} (t') ),
\end{equation}
we have $\rho^k (t) \le  C \int_0 ^t \mu (\rho^k (t') ) + \mu (\rho^{k-1} (t') ) dt',$ and writing $\rho (t) = \limsup_{k\rightarrow \infty} \rho^k (t) $ we have $\rho(t) \le  C \int_0 ^t \mu (\rho (t')) dt'$. Here we have used Reverse Fatou's lemma with uniform upper bounds on $\mu(\rho^{k})$ using that $\mu$ is bounded. This implies that $\rho=0$: in other words, $\{X^k\}$ is a Cauchy sequence in $(M, d)$. Uniqueness immediately follows from \eqref{Quasi_Lipschitz} and Osgood lemma.
\end{proof}

Next we introduce the Yudovich spaces. Let $\mathfrak{C}_0 $ be a large constant ($\mathfrak{C}_0 = 1000$ works).
\begin{definition}
Let $\Theta : (\log \mathfrak{C}_0 , \infty) \rightarrow (0, \infty)$ be an increasing function satisfying 
\begin{equation} \label{Thetagrowth}
\int_{\log \mathfrak{C}_0 } ^\infty \frac{dp}{p \Theta(p) }  = \infty.
\end{equation}
We define the generalized Yudovich space associate with $\Theta$ by
\begin{equation}\label{Yspace}
\mathbf{Y}_{\Theta}  := \left \{ \omega_0 \in L^1 (\Omega) :\| \omega_0 \|_{\mathbf{Y}_{\Theta} }:= \sup_{p \in (\log \mathfrak{C}_0 , \infty) } \frac{ \| \omega_0 \|_{L^p} }{\Theta(p) } < \infty \right \}.
\end{equation}
For each $\Theta$, we define an associated modulus of continuity
\begin{equation}\label{Mutheta}
\mu_{\Theta} (r) := \begin{cases}
r \log (1/r) \Theta (\log (1/r) ), 0 < r \le \mathfrak{C}_0^{-1}, \\
\mathfrak{C}_0^{-1} \log \mathfrak{C}_0  \Theta (\log \mathfrak{C}_0 ) , r \ge \mathfrak{C}_0^{-1}.
\end{cases}
\end{equation}
We define the associated function $\mathcal{M}_{\Theta}$ by
\begin{equation}\label{Mmod}
\mathcal{M}_{\Theta} (r) = \int_r ^{\mu_{\Theta} (\mathfrak{C}_0^{-1} ) } \frac{ds}{\mu_{\Theta} (s) }.
\end{equation}
\end{definition}

It is easy to check that $\mu_{\Theta}$ is an Osgood modulus of continuity, thanks to \eqref{Thetagrowth}. Finally, we introduce a new modulus of continuity
\begin{equation}\label{numod}
\nu_{\Theta} (r) = \exp ( - \mathcal{M}_{\Theta} (r) ).
\end{equation} 
We present the following useful lemma.
\begin{lemma} \label{pointwise}
Let $Z$ be a measure-preserving homeomorphism on $\Omega$, and let $M >0$, $\ell  \in [1, \mathfrak{C}_0 ]$. Also let $\omega_0 \in L^1 \cap \mathbf{Y}_{\Theta}$. Then for some $q>1$,
\begin{equation} \label{pointwiseestimate}
\begin{split}
\left | \int_{\Omega } \omega_0 (b)  \mathbf{1}_{|Z(b) | \ge 1} (b) \frac{1}{|Z(b) | }  db \right | &\le C \| \omega_0 \|_{L^1}, \\
 \left | \int_{\Omega } \omega_0 (b) M \mathbf{1}_{|Z(b) | \ge 1} (b) \frac{1}{|Z(b) |^2 }  db \right |  &\le  C_q M \| \omega_0 \|_{L^q}, \\
\left | \int_{\Omega } \omega_0 (b) \mathbf{1}_{1 \ge |Z(b) | \ge M } (b) \frac{M}{|Z(b)|^2}   db \right | &\le C \mu_{\Theta} (M\mathfrak{C}_0^{-1} ) \| \omega_0 \|_{\mathbf{Y}_\Theta}, \\
\left | \int_{\Omega} \omega_0 (b) \mathbf{1}_{\ell M \ge |Z(b) | } (b) \frac{1}{|Z(b) | }   db \right | &\le \begin{cases} C  \mu_{\Theta} (M\mathfrak{C}_0^{-1} ) \| \omega_0 \|_{\mathbf{Y}_{\Theta} }, \qquad M \in (0, 1), \\ C_q  \|\omega_0\|_{L^{1} \cap L^{2q}}  \qquad M \ge 1.
\end{cases}
\end{split}
\end{equation}
\end{lemma}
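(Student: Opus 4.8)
The plan is to treat all four estimates by the same two-step mechanism, since in each case the integrand is the product of $\omega_0(b)$ with a purely radial function of $Z(b)$. First I would apply H\"older's inequality in the form
\[
\int_\Omega |\omega_0(b)|\, g(|Z(b)|)\, db \le \|\omega_0\|_{L^p}\Big(\int_\Omega g(|Z(b)|)^{p'}\, db\Big)^{1/p'},
\]
and then use that $Z$ is a measure-preserving homeomorphism of $\Omega$ to rewrite the second factor as $\big(\int_{\Omega} g(|y|)^{p'}\, dy\big)^{1/p'}$, which after passing to polar coordinates reduces to an elementary radial integral (and may be freely enlarged to the full plane with a radial restriction). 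The remaining task is to choose the exponent $p$, and to invoke the Yudovich bound $\|\omega_0\|_{L^p}\le \Theta(p)\|\omega_0\|_{\mathbf{Y}_\Theta}$ when we want the right-hand side expressed through the Yudovich norm.

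The first two bounds need no optimization. In the first, the indicator $\mathbf{1}_{|Z(b)|\ge 1}$ forces $1/|Z(b)|\le 1$, so the integral is at most $\int_\Omega|\omega_0|=\|\omega_0\|_{L^1}$. In the second I would fix any $q\in(1,\infty)$, apply H\"older with exponent $q$, and use measure-preservation to evaluate $\int_{|y|\ge 1}|y|^{-2q'}\,dy=2\pi/(2q'-2)<\infty$, which converges precisely because $q'>1$; this yields the claimed $C_q M\|\omega_0\|_{L^q}$.

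The heart of the matter, and the step I expect to be the main obstacle, is the third estimate, where the weight $|Z(b)|^{-2}$ is exactly logarithmically borderline in two dimensions on the annulus $M\le |Z(b)|\le 1$. Here I would keep $p$ free, use measure-preservation to get $\big(\int_{M\le|y|\le 1}|y|^{-2p'}\,dy\big)^{1/p'}$ which, via $1/p+1/p'=1$ so that $2/p'-1=1-2/p$, is comparable to $(2p'-2)^{-1/p'}M^{1-2/p}$, and then make the classical Yudovich choice $p=\log(\mathfrak{C}_0/M)$. With this choice $M^{1-2/p}=M\,M^{-2/p}$ has $M^{-2/p}\le e^2$ bounded, the prefactor $(2p'-2)^{-1/p'}=((p-1)/2)^{1/p'}$ is of order $p\sim\log(\mathfrak{C}_0/M)$, and $\|\omega_0\|_{L^p}\le\Theta(\log(\mathfrak{C}_0/M))\|\omega_0\|_{\mathbf{Y}_\Theta}$, so the product reconstructs exactly $\mu_\Theta(M\mathfrak{C}_0^{-1})=M\mathfrak{C}_0^{-1}\log(\mathfrak{C}_0/M)\Theta(\log(\mathfrak{C}_0/M))$. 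The delicate points are tracking the universal constants through this optimization and checking that $p=\log(\mathfrak{C}_0/M)>\log\mathfrak{C}_0$ for $M\in(0,1)$, so that the Yudovich norm is licitly applied.

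For the fourth estimate the singularity $|Z(b)|^{-1}$ is integrable, so the same scheme has room to spare. When $M\ge 1$ I would split $\{|Z(b)|\le \ell M\}$ into $\{|Z(b)|\le 1\}$, handled by H\"older with exponent $2q$ whose conjugate $(2q)'<2$ makes $\int_{|y|\le 1}|y|^{-(2q)'}\,dy$ finite, and $\{1\le |Z(b)|\le \ell M\}$, where $1/|Z(b)|\le 1$ contributes at most $\|\omega_0\|_{L^1}$, giving $C_q\|\omega_0\|_{L^1\cap L^{2q}}$. When $M\in(0,1)$ I would optimize over the singular region $\{|Z(b)|\le \min(\ell M,1)\}$ with the same choice $p=\log(\mathfrak{C}_0/M)$; since the weight is only first order the resulting bound is in fact of order $M\,\Theta(\log(\mathfrak{C}_0/M))$, comfortably dominated by $\mu_\Theta(M\mathfrak{C}_0^{-1})$ because $\log(\mathfrak{C}_0/M)>\log\mathfrak{C}_0>1$. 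Finally, the outer annulus $\{1\le |Z(b)|\le \ell M\}$ is nonempty only when $\ell M>1$, which forces $M>\mathfrak{C}_0^{-1}$ so that $M$ is comparable to unity and $\mu_\Theta(M\mathfrak{C}_0^{-1})$ is bounded below by a fixed positive constant; on this set $\ell M\le\mathfrak{C}_0$ bounds the measure and $1/|Z(b)|\le 1$, so its contribution is uniformly controlled and absorbed into the stated bound.
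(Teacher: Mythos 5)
Your proposal is correct and follows essentially the same route as the paper's proof: H\"older's inequality with a free exponent $p$, measure-preservation of $Z$ to reduce everything to elementary radial integrals, and the Yudovich choice $p=\log(\mathfrak{C}_0/M)$ to reconstruct $\mu_\Theta(M\mathfrak{C}_0^{-1})$ in the two borderline estimates. The only (harmless) differences are that you track the $1/p'$ power in H\"older's inequality explicitly, whereas the paper's displayed bounds elide it (with the same net effect, since $M^{-2/p}$ and $M^{-2/(p-1)}$ are both $O(1)$ under this choice of $p$), and that you split the fourth integral at $|Z|=1$ rather than integrating $\int_0^{\ell M}r^{1-p'}\,dr$ over the whole region at once.
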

\begin{proof}
The first two inequalities are immediate, and also the case $M\ge 1$: in fact the third inequality is vacuous when $M \ge 1$. We only need to check the last two in the case $M\in (0,1)$. For the third inequality, for each $p > \log \mathfrak{C}_0 $ we have
\begin{equation}
\begin{split}
&\left | \int_{\Omega} \omega_0 (b) \mathbf{1}_{1 \ge |Z(b) | \ge M } (b) \frac{M}{|Z(b)|^2}   db \right | \le C \| \omega_0 \|_{L^p}   M \int_M ^1 r^{1 - 2p'} dr \\
&\le C  \| \omega_0 \|_{\mathbf{Y}_{\Theta} }  \Theta(p) \frac{1}{2(1-p')} M (1-M^{2(1-p')} ) \le C \| \omega_0 \|_{\mathbf{Y}_{\Theta} } \Theta(p) (p-1) M^{1-\frac{2}{p-1} }.
\end{split}
\end{equation}
Since the constant $C$ does not depend on $p$, we conclude that 
\begin{equation}
\begin{split}
\left | \int_{\Omega} \omega_0 (b) \mathbf{1}_{1 \ge |Z(b) | \ge M } (b) \frac{M}{|Z(b)|^2}   db \right | \le  C \inf_{p > \log \mathfrak{C}_0 }  \| \omega_0 \|_{\mathbf{Y}_{\Theta} } \Theta(p) (p-1) M^{1-\frac{2}{p-1} }.
\end{split}
\end{equation}
Similarly, for $M \in (0,1)$, 
\begin{equation}
\begin{split}
&\left | \int_{\Omega} \omega_0 (b) \mathbf{1}_{\ell M \ge |Z(b) | } (b) \frac{1}{|Z(b) | }   db \right | \le C \| \omega_0 \|_{L^p} \int_0 ^{\ell M} r^{1-p'} dr \\
& = C \| \omega_0 \|_{\mathbf{Y}_{\Theta} } \Theta(p) \frac{1}{2-p'}  \ell^{2-p'} M^{2-p'} \le C \| \omega_0 \|_{\mathbf{Y}_{\Theta} } \Theta(p) M^{1- \frac{1}{p-1} },
\end{split}
\end{equation}
using $1/(2-p') < C(\mathfrak{C}_0 ) $ for $p>\log \mathfrak{C}_0 $ and that $\ell \in [1, \mathfrak{C}_0 ]$. Again we conclude that
\begin{equation}
\begin{split}
&\left | \int_{\Omega} \omega_0 (b) \mathbf{1}_{\ell M \ge |Z(b) | } (b) \frac{1}{|Z(b) | }   db \right | \le
C \| \omega_0 \|_{\mathbf{Y}_{\Theta} } \inf_{p > \log \mathfrak{C}_0 } \Theta(p) M^{1-\frac{1}{p-1} }.
\end{split}
\end{equation}
We obtain the conclusion by taking $p = \log \frac{\mathfrak{C}_0}{ M } (>\log \mathfrak{C}_0 )$.
\end{proof}

\section{Yudovich solution operator}

\begin{definition}
We denote $\mathcal{M}$ by the space of measure-preserving homeomorphisms on $\Omega$, and equip the distance $d(X, Y) = \| X - Y \|_{L^\infty}$.
\end{definition}

\begin{definition}
For $T>0$ and $\omega_0 \in L^1 \cap \mathbf{Y}_{\Theta} $, we define $S: C([0, T]; \mathcal{M}) \rightarrow C([0, T] ; \mathcal{M} ) $
\begin{equation} \label{solnop}
\begin{split}
\tilde{X} &= S[X] = S_{\omega_0} [X], \qquad \frac{d}{dt} \tilde{X} (a,t) = v_{X(t)} (\tilde{X}(a,t)), \qquad \tilde{X} (\cdot, 0) =  \mathrm{id}  \\
v_X (y) &= \int_{\Omega} K(y, X(b) ) \omega_0 (b) db.
\end{split}
\end{equation}
\end{definition}
In order to make sense of the definition, we need to prove three: first, the Cauchy problem in \eqref{solnop} admits the unique solution, second, $\tilde{X}(t) \in \mathcal{M}$, and last, $\tilde{X}(t)$ is continuous on $t$. The first claim is immediate: we first note that if $\phi \in L^1$ and $X \in \mathcal{M}$, then $\phi \circ X, \phi \circ X^{-1} \in L^1$ and  
\begin{equation*}
\int_{\Omega} \phi (x) dx = \int_{\Omega} \phi \circ X(a) da = \int_{\Omega} \phi \circ X^{-1} (x) dx.
\end{equation*}
Then we observe that if $\omega_0 \in L^1 \cap \mathbf{Y}_{\Theta}$, then so is $\omega_0 \circ X^{-1}$ with its norm unchanged. Finally, we see that for each $y \in \Omega $, $x \rightarrow K(y, x) (\omega_0 \circ X^{-1}) (x) $ is $L^1$: we have 
\begin{equation*}
\begin{split}
\int_{\Omega} \left | K(y,x) (\omega_0 \circ X^{-1}) (x) \right |dx &\le \int_{|y-x| \ge 1 }   | \omega_0 \circ X^{-1} (x) | dx  + \int_{|y-x| <1}  \frac{1}{|y-x|} | \omega_0 \circ X^{-1} (x) | dx \\
&\le \| \omega_0 \|_{L^1} + C_p \| \omega_0  \|_{L^p}
\end{split}
\end{equation*}
for some $p>\log \mathfrak{C}_0$. Therefore, we see that $v_X (y) = \nabla^\perp \Delta^{-1} (\omega_0 \circ X^{-1} ) (y)$ and it is well-known that for $\omega_0 \circ X^{-1} \in L^1 \cap \mathbf{Y}_{\Theta}$, $\nabla^\perp \Delta^{-1} (\omega_0 \circ X^{-1} ) $ has an Osgood modulus of continuity $\varphi_{\Theta}$, depending only on $\Theta$, and thus admitting unique flow. 

Next, we prove the last claim. It suffices to show the continuity of $S$ on time at $t=0$: other cases can be shown in the same manner. 
\begin{lemma} \label{timeconti}
We have the following estimate: for any $q > 2$,
\begin{equation}
d(\tilde{X}(t), \mathrm{Id } ) \le C_q \| \omega_0 \|_{L^1 \cap L^q} t + C \|\omega_0 \|_{L^1 \cap \mathbf{Y}_{\Theta} } \int_0 ^t  \mu_{\Theta}  ( d (\tilde{X}(s), \mathrm{id} ) ) ds.
\end{equation}
\end{lemma}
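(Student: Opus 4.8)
The plan is to integrate the defining ODE for $\tilde X = S[X]$ and to split the resulting velocity integral into a ``far-field'' piece that is bounded pointwise (producing the linear-in-time term) and a ``near-field'' piece controlled by the modulus of continuity of the velocity (producing the Osgood term). Since $\tilde X(a,0)=a$, integrating \eqref{solnop} in time gives
\begin{equation}
\tilde X(a,t) - a = \int_0^t v_{X(s)}(\tilde X(a,s))\, ds = \int_0^t v_{X(s)}(a)\, ds + \int_0^t \left( v_{X(s)}(\tilde X(a,s)) - v_{X(s)}(a) \right) ds.
\end{equation}
Taking the supremum over $a$ and invoking monotonicity of $\mu_{\Theta}$ at the very end will convert the pointwise estimates into the self-referential integral inequality claimed in the statement.

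For the first integral I would use a pointwise bound on the Biot--Savart velocity read off directly from \eqref{solnop}. By Lemma \ref{Kernelestim}(1), $|v_{X(s)}(a)| \le C\int_{\Omega} |Z(b)|^{-1}|\omega_0(b)|\, db$ where $Z(b) := a - X(s)(b)$ is measure preserving since $X(s)\in\mathcal{M}$. Splitting $\Omega$ into $\{|Z|\ge 1\}$ and $\{|Z|\le 1\}$ and applying the first and last estimates of Lemma \ref{pointwise} (the latter in its $M\ge 1$ regime) yields $|v_{X(s)}(a)| \le C_q\|\omega_0\|_{L^1\cap L^q}$ for any $q>2$, and hence the term $C_q\|\omega_0\|_{L^1\cap L^q}\,t$ after integrating in $s$. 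Note that this bound is uniform in $s$ and in $X(s)\in\mathcal{M}$ because the relevant norms of $\omega_0$ are invariant under composition with a measure-preserving map.

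The heart of the matter is the second integral, where I must estimate the velocity increment $|v_{X(s)}(\tilde X(a,s)) - v_{X(s)}(a)|$ by the displacement $|\tilde X(a,s) - a|$; this is exactly the assertion, announced after \eqref{solnop}, that $v_{X(s)}$ carries an Osgood modulus $\varphi_{\Theta}$ comparable to $\mu_{\Theta}$. To establish it I would fix $x_1,x_2$, set $r=|x_1-x_2|$, and decompose $\int_{\Omega}\left[K(x_1,X(s)(b))-K(x_2,X(s)(b))\right]\omega_0(b)\,db$ into the region where $X(s)(b)$ lies within distance $O(r)$ of $\{x_1,x_2\}$ and its complement. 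On the complement the two evaluation points are ``far'' in the sense of \eqref{yfar}, so the kernel-difference bound of Lemma \ref{Kernelestim}(2) applies and the resulting $r|Z|^{-2}$ integrals are handled by the second estimate (on $\{|Z|\ge 1\}$) and the third estimate (on the annulus $\{r\lesssim|Z|\le 1\}$, with $M\sim r$) of Lemma \ref{pointwise}; on the near region $\{|Z|\lesssim r\}$ the crude $|Z|^{-1}$ bound of Lemma \ref{Kernelestim}(1) together with the last estimate of Lemma \ref{pointwise} in its $M<1$ regime (again $M\sim r$) controls both contributions. Using $\mu_{\Theta}(r)=r\log(1/r)\Theta(\log(1/r))$, the Lipschitz-type contribution from the second estimate is absorbed into the dominant $\mu_{\Theta}$ term, giving $|v_{X(s)}(x_1)-v_{X(s)}(x_2)| \le C\|\omega_0\|_{L^1\cap\mathbf{Y}_{\Theta}}\,\mu_{\Theta}(|x_1-x_2|)$. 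I expect the main obstacle to be precisely the bookkeeping of this near/far decomposition: verifying that the four pieces reassemble into the single modulus $\mu_{\Theta}$ with a constant that is uniform in $s$ and in $X(s)\in\mathcal{M}$.

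Applying this increment estimate with $x_1=\tilde X(a,s)$ and $x_2=a$ bounds the second integrand by $C\|\omega_0\|_{L^1\cap\mathbf{Y}_{\Theta}}\,\mu_{\Theta}(|\tilde X(a,s)-a|)$. Taking the supremum over $a$ and replacing $|\tilde X(a,s)-a|$ by $d(\tilde X(s),\mathrm{id})$ using monotonicity of $\mu_{\Theta}$ then gives exactly the stated inequality. As a concluding remark, since $\mu_{\Theta}$ is bounded this inequality already forces $d(\tilde X(t),\mathrm{id})=O(t)$, which is the sought continuity at $t=0$; feeding the same inequality into the Osgood Lemma \ref{Osgood} upgrades this to the sharp time-dependent modulus.
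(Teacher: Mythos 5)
Your proposal is correct and is essentially the paper's own argument: the paper likewise splits the velocity into $\int K(a,X(b,t))\omega_0(b)\,db$ (bounded by $C_q\|\omega_0\|_{L^1\cap L^q}$) plus the kernel increment $K(\tilde X(a,t),X(b,t))-K(a,X(b,t))$, which it controls by the same near/far decomposition at scale $M=\mathfrak{C}_0|\tilde X(a,t)-a|$ using Lemma \ref{Kernelestim} and Lemma \ref{pointwise}. The only cosmetic difference is that you package the increment bound as a standalone Osgood-modulus estimate for $v_{X(s)}$ (and work with the integrated rather than differential form), whereas the paper performs the same splitting inline on $F_a(b)$ and integrates the resulting differential inequality at the end.
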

\begin{proof}
Suppose that $|\tilde{X}(a,t) - a| \le \mathfrak{C}_0^{-1} $. We have
\begin{equation}
\begin{split}
\frac{d}{dt} \left ( \tilde{X}(a,t) - a \right ) &=  \int_{\Omega} \left ( K(\tilde{X}(a,t), X(b,t) ) - K(a, X(b, t)) \right ) \omega_0 (b ) db  \\
&+ \int_{\Omega} K(a, X(b,t) ) \omega_0 (b) db.
\end{split}
\end{equation}
First we see that $\left | \frac{1}{2\pi} \int_{\Omega} K(a, X(b,t)) \omega_0 (b) db \right | \le C_q \| \omega_0 \|_{L^1 \cap L^q}$ for any $q>2$. Next, we let
\begin{equation}
F_{a} (b) = K(\tilde{X}(a,t), X(b,t) ) - K(a, X(b,t) )
\end{equation}
and 
\begin{equation}
M = \mathfrak{C}_0 |\tilde{X}(a,t) - a|.
\end{equation}
Now if $|X(b,t) - \tilde{X}(a,t) | \ge M$, then $|X(b,t) - \tilde{X}(a,t) | \ge 2 |\tilde{X}(a,t) - a|$ (since $\mathfrak{C}_0$ is large) and
\begin{equation*}
\begin{split}
|a - X(b,t) | &\ge |X(b, t) - \tilde{X}(a,t) | - \mathfrak{C}_0^{-1} M \ge (1-\mathfrak{C}_0^{-1}) M \ge 2 |\tilde{X}(a,t) - a|, \\
|a - X(b,t) | &\ge |X(b, t) - \tilde{X}(a,t) | - \mathfrak{C}_0^{-1} M \ge (1- \mathfrak{C}_0^{-1}) |X(b,t) - \tilde{X}(a,t) |.
\end{split}
\end{equation*}
On the other hand, if $|X(b,t) - \tilde{X}(a,t) | \le M$, then 
\begin{equation}
|a - X(b,t) | \le |X(b, t) - \tilde{X}(a,t ) | + |\tilde{X}(a,t) - a | \le (1+\mathfrak{C}_0^{-1}) M.
\end{equation}
Therefore, we have
\begin{equation*}
\begin{split}
|F_a (b) | &\le   \frac{C\mathbf{1}_{ |X(b,t) - \tilde{X}(a,t) | \ge 1 } (b) }{| X(b,t) - \tilde{X} (a,t) |} +  \frac{CM \mathbf{1}_{1 \ge |X(b,t) - \tilde{X}(a,t) | \ge M } (b) }{|X(b,t) - \tilde{X}(a,t) |^2 } +  \frac{C \mathbf{1}_{M \ge |X(b,t) - \tilde{X}(a,t) | } (b) }{|X(b,t) - \tilde{X}(a,t) | } \\
&\qquad + \frac{C\mathbf{1}_{(1+\mathfrak{C}_0^{-1}) M \ge |X(b,t) - a | } (b) }{|X(b,t) - a | },
\end{split}
\end{equation*}
if $M \le 1$, and if $M>1$, we have $\left |\int_{\Omega} F_a (b) \omega_0 (b) db \right | \le C_q \| \omega_0 \| _{L^1 \cap L^q}$ for any $q>2$.
Thus, by Lemma \ref{pointwise}
\begin{equation}
\frac{d}{dt} |\tilde{X}(a,t) - a|  \le C_q \| \omega_0 \|_{L^1 \cap L^q} + C \| \omega_0 \| _{L^1 \cap \mathbf{Y}_{\Theta} } \mu_{\Theta} (|\tilde{X}(a,t) - a|)
\end{equation} 
for any $q > 2$. Integrating over time gives the desired conclusion.
\end{proof}

Next, we show that $\tilde{X}(t) \in \mathcal{M}$. In fact, we show that $\tilde{X}(t)$ is measure-preserving and has a definite modulus of continuity (which may deteriorate over time). We first show that $\tilde{X}$ is measure-preserving.
\begin{lemma} \label{volpres}
For each $t \in [0, T]$, $\tilde{X}(t)$ is continuous. Also, for a continuous function $\phi$ with compact support, 
\begin{equation}
\int_{\Omega} \phi(\tilde{X} (a,t) ) da = \int_{\Omega} \phi(y) dy.
\end{equation}
\end{lemma}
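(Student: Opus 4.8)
The plan is to prove the two assertions separately: the continuity of $\tilde{X}(t)$ is an Osgood estimate on the separation of two trajectories, while the measure-preservation identity is obtained by smooth approximation together with the stability of flows of Osgood fields.

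For the continuity, recall that the velocity $v_{X(t)} = \nabla^\perp \Delta^{-1}(\omega_0 \circ X(t)^{-1})$ was shown above to possess an Osgood modulus of continuity $\varphi_\Theta$, uniformly in $t \in [0,T]$. I fix two labels $a_1, a_2$ and set $\rho(t) = |\tilde{X}(a_1,t) - \tilde{X}(a_2,t)|$. Subtracting the two copies of the ODE in \eqref{solnop} and integrating in time gives $\rho(t) \le |a_1 - a_2| + C\int_0^t \varphi_\Theta(\rho(s))\,ds$, which is precisely the hypothesis of the Osgood lemma (Lemma~\ref{Osgood}) with $c = |a_1-a_2|$, $t_0 = 0$ and $\gamma \equiv C$. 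Invoking it yields $\mathcal{M}_\Theta(\rho(t)) \ge \mathcal{M}_\Theta(|a_1-a_2|) - Ct$, and since $\mathcal{M}_\Theta$ is strictly decreasing with $\mathcal{M}_\Theta(r) \to \infty$ as $r \to 0^+$, this bounds $\rho(t)$ by the explicit deteriorating modulus $\nu_\Theta^{-1}\!\big(e^{Ct}\,\nu_\Theta(|a_1-a_2|)\big)$ using $\nu_\Theta = e^{-\mathcal{M}_\Theta}$. In particular $\tilde{X}(t)$ is uniformly continuous for each fixed $t$, with a modulus that degrades in time exactly as advertised in the introduction; applying the same argument to the time-reversed ODE produces a continuous inverse, so $\tilde{X}(t)$ is in fact a homeomorphism.

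For the measure-preservation identity, the difficulty is that $v_{X(t)}$ is only Osgood continuous, so the classical Liouville theorem (the Jacobian of a divergence-free flow is identically one) cannot be quoted directly. I would instead regularize, mollifying the velocity in the spatial variable, $v^\epsilon(\cdot,t) := v_{X(t)} * \eta_\epsilon$. Because mollification commutes with the divergence, each $v^\epsilon$ is smooth and divergence-free, so its flow $X^\epsilon(\cdot,t)$ is a smooth diffeomorphism with unit Jacobian, and hence $\int_\Omega \phi(X^\epsilon(a,t))\,da = \int_\Omega \phi(y)\,dy$ for every continuous compactly supported $\phi$ and every $\epsilon$. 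Two facts let me pass to the limit. First, mollification does not worsen the modulus, since $|v^\epsilon(y_1) - v^\epsilon(y_2)| \le \sup_z |v_{X(t)}(y_1-z) - v_{X(t)}(y_2-z)| \le C\varphi_\Theta(|y_1-y_2|)$, so the $v^\epsilon$ are uniformly Osgood continuous, and $v^\epsilon \to v_{X(t)}$ uniformly as $v_{X(t)}$ is continuous on the bounded domain. Second, setting $\sigma(t) = \sup_a |X^\epsilon(a,t) - \tilde{X}(a,t)|$ one has $\sigma(t) \le C\int_0^t \varphi_\Theta(\sigma(s))\,ds + t\,\|v^\epsilon - v_{X(t)}\|_{L^\infty}$, and Lemma~\ref{Osgood} forces $\sigma(t) \to 0$ as $\epsilon \to 0$. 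Thus $X^\epsilon(\cdot,t) \to \tilde{X}(\cdot,t)$ uniformly, so $\phi(X^\epsilon(\cdot,t)) \to \phi(\tilde{X}(\cdot,t))$ uniformly by uniform continuity of $\phi$, and the integral identity passes to the limit to give $\int_\Omega \phi(\tilde{X}(a,t))\,da = \int_\Omega \phi(y)\,dy$.

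The step I expect to require the most care is the regularization near $\partial\Omega$ in the bounded-domain case: naive mollification destroys the tangency condition $v^\epsilon \cdot n = 0$, so the approximating flow need not preserve $\Omega$ and Liouville's theorem cannot be applied verbatim. I would resolve this by exploiting the Hamiltonian structure $v_{X(t)} = \nabla^\perp \psi$ with $\psi = \Delta^{-1}(\omega_0 \circ X(t)^{-1})$, mollifying the stream function $\psi$ rather than $v$ so that the regularized field remains an exact $\nabla^\perp$ whose flow is an area-preserving symplectomorphism, combined with restricting $\phi$ to be supported in the interior and exhausting $\Omega$ by compact subsets. On the torus or a closed surface this difficulty is absent and the mollification argument goes through unchanged, so the remaining work is purely the boundary bookkeeping.
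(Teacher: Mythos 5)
Your first part (continuity of $\tilde X(t)$) is correct: it is in essence the same trajectory-separation estimate that the paper proves as Lemma~\ref{Xregular} (there carried out with the kernel decomposition and Lemma~\ref{pointwise}, which is what produces the explicit modulus $\mu_\Theta$), and running it with the Osgood modulus $\varphi_\Theta$ of $v_{X(t)}$ is legitimate since the paper has already recorded that $v_{X(t)}=\nabla^\perp\Delta^{-1}(\omega_0\circ X(t)^{-1})$ has that modulus; the paper instead deduces continuity of $\tilde X$ as a uniform limit of approximating flows. For the measure-preservation identity your architecture --- regularize, uniform Osgood stability via Lemma~\ref{Osgood}, pass to the limit --- is exactly the paper's, but you regularize a different object. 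You mollify the \emph{velocity} (stream function); the paper mollifies the \emph{vorticity}, setting $\omega_0^\varepsilon=\omega_0\star\phi_\varepsilon$ and $\tilde X_\varepsilon=S_{\omega_0^\varepsilon}[X]$, and uses $\|v_X^{(\varepsilon)}-v_X\|_{L^\infty}\le C_q\|\omega_0-\omega_0^\varepsilon\|_{L^1\cap L^q}\to 0$. The paper's choice keeps the approximating field inside the class generated by the solution operator \eqref{solnop}: $v_X^{(\varepsilon)}$ is still a Dirichlet Biot--Savart field, hence divergence free \emph{and} tangent to $\partial\Omega$, so $\tilde X_\varepsilon$ is again a flow of $\Omega$ itself and the uniform velocity convergence holds up to the boundary. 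What your choice buys: on $\mathbb{T}^2$ your approximants are genuinely smooth fields, so the unit-Jacobian (Liouville) step is classical and unconditional, whereas the paper's smooth-data base case still concerns $K[\omega_0^\varepsilon\circ X^{-1}]$, a composition with a bare homeomorphism, so your route is arguably more self-contained on domains without boundary.

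On a bounded domain, however, there is a genuine gap, and it is more than ``boundary bookkeeping.'' To mollify $\psi$ near $\partial\Omega$ you must extend it; extension by zero is the canonical choice (using $\psi|_{\partial\Omega}=0$), but then $\nabla\psi$ jumps across $\partial\Omega$, because the tangential component of $v=\nabla^\perp\psi$ equals (up to sign) the normal derivative $\partial_n\psi$, which does \emph{not} vanish on the boundary in general. Consequently $\nabla^\perp(\psi\star\eta_\epsilon)$ fails to converge uniformly to $v_{X(t)}$ on $\Omega$: in the $\epsilon$-collar of $\partial\Omega$ the error remains of the order of the boundary speed, not $o(1)$. Your displayed stability inequality for $\sigma(t)$ requires $\|v^\epsilon-v_{X(t)}\|_{L^\infty(\Omega)}\to 0$ and is therefore false as written; everything must be localized. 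After localizing, what must be proved, uniformly in $\epsilon$, is (i) that forward trajectories of $X^\epsilon$ issued from $\tilde X_t^{-1}(\mathrm{supp}\,\phi)$ stay in a fixed compact subset of $\Omega$ where the localized convergence and stability apply, and (ii) that no $X^\epsilon$-trajectory starting in the collar or outside $\Omega$ enters $\mathrm{supp}\,\phi$ by time $t$ --- without (ii), the identity Liouville's theorem gives for your globally defined Hamiltonian flow, $\int_{\mathbb{R}^2}\phi(X^\epsilon(a,t))\,da=\int\phi(y)\,dy$, cannot be converted into the identity over $\Omega$ that Lemma~\ref{volpres} asserts. Both (i) and (ii) are confinement statements, and confinement is precisely what is lost once the tangency $v^\epsilon\cdot n=0$ is destroyed; supplying them is the substantive remaining work. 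The paper's device of mollifying $\omega_0$ rather than $v$ exists exactly so that (i) and (ii) are automatic.
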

\begin{proof}
We note that if $\omega_0 \in H^s \cap L^1 \cap \mathbf{Y}_{\Theta}$ for sufficiently large $s>1$, then the statement is true: we can define $\nabla_a \tilde{X}$, and $\mathrm{det} \nabla_a \tilde{X}$ can easily be shown to be 1. Now we denote $\omega_0 ^\varepsilon = \omega_0 \star \phi_\varepsilon$, where $\phi_\varepsilon (x) = \frac{1}{\varepsilon} \phi (\varepsilon^{-1} x )$ with $\phi$ being a standard mollifier, and let $\tilde{X}_\varepsilon = S_{\omega_0^\varepsilon} [X].$ Since $\sup_{\varepsilon>0} \| \omega_0 ^\varepsilon \|_{L^1 \cap \mathbf{Y}_{\Theta} } \le \| \omega_0  \|_{L^1 \cap \mathbf{Y}_{\Theta} }< \infty$, we see that associated velocity fields $v_X ^{(\varepsilon)}$ for $S_{\omega_0 ^\varepsilon}$ have uniform Osgood modulus $\varphi_{\Theta}$. 

Next we show that $v_X ^{(\varepsilon)}$ converges to $v_X$ in $L^\infty$: note that for every $y \in \mathbb{R}^2$,
\begin{equation*}
v_X ^{(\varepsilon)} (y) - v_X (y) = \int_{\Omega} K(y,x) (\omega_0 - \omega_0 ^\varepsilon ) \circ X^{-1} (x) dx,
\end{equation*}
and therefore for any $q>2$, $\| v_X^{(\varepsilon)} - v_X \|_{L^\infty} \le  C_q \| \omega_0 - \omega_0 ^\varepsilon \|_{L^1 \cap L^q} \rightarrow 0,$ as $\varepsilon \rightarrow 0$.

Finally, we have
\begin{equation}
\frac{d}{dt} (\tilde{X} - \tilde{X}_{\varepsilon} ) (a,t) = \left (v_X (\tilde{X} (a,t)) - v_X (\tilde{X}_{\varepsilon} (a,t) ) \right ) + (v_X - v_X^{(\varepsilon)} ) (\tilde{X}_{\varepsilon} (a,t) )
\end{equation}
and thus
\begin{equation}
\frac{d}{dt} \| \tilde{X} - \tilde{X}_{\varepsilon } \|_{L^\infty} \le C \varphi_{\Theta} (\| \tilde{X} - \tilde{X}_{\varepsilon } \|_{L^\infty} ) + \| v_X^{(\varepsilon)} - v_X \|_{L^\infty} ,
\end{equation}
and Osgood's lemma gives us the uniform convergence of $\tilde{X}_{\varepsilon}$ to $\tilde{X}$. This shows that $\tilde{X}$ is continuous. Now, since from Lemma \ref{timeconti} and Osgood's lemma we see that $\sup_{t \in [0, T]} d (\tilde{X} (t), \mathrm{id} ) \le C(T, \| \omega_0 \|_{L^1 \cap \mathbf{Y}_{\Theta} } )$, which implies that there exists a constant $C := C(T, \| \omega_0 \|_{L^1 \cap \mathbf{Y}_{\Theta} } )$ such that for every $\varepsilon>0$
\begin{equation}
\mathrm{supp} \phi \circ \tilde{X}_{\varepsilon}, \mathrm{supp} \phi \circ \tilde{X} \subset \mathrm{supp} \phi + B_{C} (0).
\end{equation}
Then we obtain the desired result from interchangeability of uniform convergence and integration.
\end{proof}
Lemma \ref{volpres} implies, by density argument, that $\tilde{X}(t)$ is measure-preserving. Thus $\tilde{X}(t) \in \mathcal{M}$. Therefore, $S : C([0, T] ; \mathcal{M} ) \rightarrow C([0, T] ; \mathcal{M} )$ is well-defined. 

Next, we show that $\tilde{X}$ enjoys a modulus of continuity, which is uniform over $X \in \mathcal{M}$.
\begin{lemma} \label{Xregular}
We have the following estimate:
\begin{equation}
-\mathcal{M}_{\Theta} (|\tilde{X}(a,t) - \tilde{X} (b,t) | ) \le - \mathcal{M}_{\Theta} (|a-b| ) + C  \| \omega_0 \|_{L^1 \cap \mathbf{Y}_{\Theta} }t.
\end{equation}
\end{lemma}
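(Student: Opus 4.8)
The plan is to mirror, for a pair of trajectories, the pointwise velocity bound already obtained in Lemma~\ref{timeconti}, and then to integrate the resulting autonomous differential inequality through the function $\mathcal{M}_\Theta$ defined in \eqref{Mmod}. Fix labels $a\neq b$ (if $a=b$ both sides equal $-\infty$ and there is nothing to prove), write $y_1 = \tilde X(a,t)$, $y_2=\tilde X(b,t)$, and set $\delta(t)=|y_1-y_2|$. Since $\tilde X(t)$ is a homeomorphism by Lemma~\ref{volpres}, it is injective, so $\delta(t)>0$ for all $t$; hence $|\cdot|$ causes no trouble and, using the upper Dini derivative,
\begin{equation*}
\frac{d}{dt}\delta(t) \;\le\; \big|v_{X(t)}(y_1)-v_{X(t)}(y_2)\big| \;=\; \left| \int_\Omega \big( K(y_1, X(c,t)) - K(y_2, X(c,t)) \big)\, \omega_0(c)\, dc \right|.
\end{equation*}
The crucial structural point is that, unlike in Lemma~\ref{timeconti} where the comparison was against the fixed label $a$ and produced an additive velocity term $\int_\Omega K(a,X(c,t))\omega_0\,dc$, here both endpoints are genuine flow points, so no additive constant survives and we expect a pure $\mu_\Theta$ bound.

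To obtain that bound I would set $M=\mathfrak{C}_0\,\delta(t)$ and split the integral exactly as in the proof of Lemma~\ref{timeconti}. On the far region $\{|X(c,t)-y_1|\ge M\}$, largeness of $\mathfrak{C}_0$ guarantees the geometric condition \eqref{yfar} (indeed $|y_1-y_2|=\delta \le \tfrac12 M \le \tfrac12|X(c,t)-y_1|$), so the Lipschitz-type kernel estimate of Lemma~\ref{Kernelestim} controls the difference by $C\delta/(|X(c,t)-y_1|+|X(c,t)-y_2|)^2$; subdividing into $M\le|X(c,t)-y_1|\le 1$ and $|X(c,t)-y_1|\ge 1$ and invoking the third and second lines of Lemma~\ref{pointwise} (with the measure-preserving map $c\mapsto X(c,t)-y_1$, a translate of $X(t)$) yields $C\|\omega_0\|_{L^1\cap\mathbf{Y}_\Theta}\,\mu_\Theta(\delta)$, using $M\mathfrak{C}_0^{-1}=\delta$ and $\delta\lesssim_\Theta\mu_\Theta(\delta)$ for the Lipschitz piece. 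On the near region $\{|X(c,t)-y_1|\le M\}$ I bound each kernel separately by $C/|X(c,t)-y_i|$ and apply the fourth line of Lemma~\ref{pointwise} to the two centers $y_1$ and $y_2$ (the latter with $\ell=1+\mathfrak{C}_0^{-1}\in[1,\mathfrak{C}_0]$, since $|X(c,t)-y_2|\le(1+\mathfrak{C}_0^{-1})M$ there), again giving $C\|\omega_0\|_{\mathbf{Y}_\Theta}\,\mu_\Theta(\delta)$. The case $M>1$ (that is $\delta>\mathfrak{C}_0^{-1}$) is handled by the $L^1\cap L^q$ bounds as in Lemma~\ref{timeconti} and is absorbed into $\mu_\Theta(\delta)$, since $\mu_\Theta$ is a positive constant there. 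Altogether,
\begin{equation*}
\frac{d}{dt}\,|\tilde X(a,t)-\tilde X(b,t)| \;\le\; C\,\|\omega_0\|_{L^1\cap\mathbf{Y}_\Theta}\,\mu_\Theta\big(|\tilde X(a,t)-\tilde X(b,t)|\big).
\end{equation*}

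Finally I would integrate this inequality. Since $\mathcal{M}_\Theta'(r)=-1/\mu_\Theta(r)$ and $\mu_\Theta>0$, the chain rule gives $\frac{d}{dt}\big[-\mathcal{M}_\Theta(\delta(t))\big]=\mu_\Theta(\delta(t))^{-1}\,\frac{d}{dt}\delta(t)\le C\|\omega_0\|_{L^1\cap\mathbf{Y}_\Theta}$, and integrating from $0$ to $t$ together with $\delta(0)=|\tilde X(a,0)-\tilde X(b,0)|=|a-b|$ (as $\tilde X(\cdot,0)=\mathrm{id}$) yields precisely
\begin{equation*}
-\mathcal{M}_\Theta\big(|\tilde X(a,t)-\tilde X(b,t)|\big) \;\le\; -\mathcal{M}_\Theta(|a-b|) + C\,\|\omega_0\|_{L^1\cap\mathbf{Y}_\Theta}\,t,
\end{equation*}
which is the claim (equivalently, one could phrase the last step through Osgood's Lemma~\ref{Osgood}). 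I expect the only real work to be the region-splitting estimate of the middle paragraph: verifying \eqref{yfar} on the far field, correctly matching each of the four pieces to the inequalities of Lemma~\ref{pointwise}, and checking $\delta\lesssim_\Theta\mu_\Theta(\delta)$ so the Lipschitz near-field contribution is dominated by $\mu_\Theta$. The rest is routine once one notes the cancellation of the additive velocity term that distinguishes this two-point estimate from the one-point estimate of Lemma~\ref{timeconti}.
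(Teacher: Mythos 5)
Your proposal is correct and takes essentially the same route as the paper's proof: the same splitting with $M=\mathfrak{C}_0|\tilde X(a,t)-\tilde X(b,t)|$ into far and near regions, the same kernel estimates from Lemma \ref{Kernelestim} matched piece-by-piece to Lemma \ref{pointwise}, and the same Osgood-type integration at the end. The only differences are cosmetic: your use of the upper Dini derivative and the explicit chain-rule integration of $\mathcal{M}_\Theta$ in place of citing Osgood's lemma directly.
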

\begin{proof}
Let $a, b \in \Omega$, and $M = \mathfrak{C}_0 |\tilde{X} (a,t) - \tilde{X} (b,t) |$. 
\begin{equation}
\begin{split}
\frac{d}{dt} (\tilde{X}(a,t) - \tilde{X} (b,t) ) &=  \int_{\Omega } \left ( K (\tilde{X}(a,t), X(c,t) ) - K(\tilde{X}(b,t), X(c,t) ) \right ) \omega_0 (c) dc.
\end{split}
\end{equation}
Now if $|X(c,t) - \tilde{X}(a,t) | \ge M$, 
\begin{equation}
\begin{split}
|\tilde{X}(b,t) - X(c,t) | &\ge |X(c,t) - \tilde{X}(a,t) | - |\tilde{X}(a,t) - \tilde{X}(b,t) | \ge (1-\mathfrak{C}_0^{-1}) |X(c,t) - \tilde{X}(a,t) |, \\
|\tilde{X}(a,t) - X(c,t) | &\ge 2 | \tilde{X}(a,t) - \tilde{X}(b,t) |,  \\
|\tilde{X}(b,t) - X(c,t) | &\ge |X(c,t) - \tilde{X}(a,t) | - |\tilde{X}(a,t) - \tilde{X}(b,t) | \ge 2 |\tilde{X} (a,t) - \tilde{X}(b,t) |,
\end{split}
\end{equation}
and if $|X(c,t) - \tilde{X}(a,t) | \le M$,
\begin{equation}
|\tilde{X}(b,t) - X(c,t) | \le |X(c,t) - \tilde{X}(a,t) | + |\tilde{X}(a,t) - \tilde{X}(b,t) | \le (1+\mathfrak{C}_0^{-1}) |\tilde{X}(a,t) - \tilde{X}(b,t) |.
\end{equation}
To summarize, if $0 < M < 1$, we have
\begin{equation}
\begin{split}
&\left | K (\tilde{X}(a,t), X(c,t) ) - K(\tilde{X}(b,t), X(c,t) ) \right | \le  \frac{C M  \mathbf{1}_{|X(c,t) - \tilde{X}(a,t) | \ge 1 } (c)}{|\tilde{X}(a,t) - X(c,t) |^2} \\
&+  \frac{CM  \mathbf{1}_{1 \ge |X(c,t) - \tilde{X}(a,t) | \ge M } (c)}{|\tilde{X}(a,t) - X(c,t) |^2} + \frac{ C \mathbf{1}_{|X(c,t) - \tilde{X}(a,t) | \le M}(c) }{|\tilde{X}(a,t) - X(c,t) | } +  \frac{C \mathbf{1}_{|X(c,t) - \tilde{X}(b,t) | \le  (1+\mathfrak{C}_0^{-1})M}(c)}{|\tilde{X}(b,t) - X(c,t) | } 
\end{split}
\end{equation}
for $M \in (0,1)$. For $M \ge 1$, the first term $C \mathbf{1}_{|X(c,t) - \tilde{X}(a,t) | \ge 1 } (c) \frac{M}{|\tilde{X}(a,t) - X(c,t) |^2} $ is replaced by $C_q \| \omega_0 \|_{L^1 \cap L^q} $ for some $q > 2$. Then Lemma \ref{pointwise} gives
\begin{equation}
\frac{d}{dt} | \tilde{X}(a,t) - \tilde{X}(b,t) | \le C \| \omega_0 \|_{L^1 \cap \mathbf{Y}_{\Theta} }  \mu_{\Theta} (|\tilde{X}(a,t) - \tilde{X}(b,t) |)
\end{equation}
and integration over time and application of Osgood's lemma gives us the desired conclusion.
\end{proof}
Finally, we prove that \eqref{Quasi_Lipschitz} holds.  
\begin{lemma}
Let $\tilde{X} = S_{\omega_0} [X]$ and $\tilde{Y} = S_{\omega_1} [Y]$. Then for every $p > 2$,
\begin{equation} 
\frac{d}{dt} d(\tilde X(t), \tilde Y(t)) \le C \| \omega_0 - \omega_1 \|_{L^1 \cap L^p} + C \|\omega_0 \|_{L^1 \cap \mathbf{Y}_{\Theta} } (\mu_{\Theta} ( d(X(t), Y(t)) ) + \mu_{\Theta} (d(\tilde{X}(t), \tilde{Y} (t) ) ) ).
\end{equation}
\end{lemma}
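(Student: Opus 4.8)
The plan is to bound $\tfrac{d}{dt}|\tilde X(a,t)-\tilde Y(a,t)|$ for a fixed label $a$ and then pass to the supremum. Denoting by $w_Y(y)=\int_\Omega K(y,Y(b))\omega_1(b)\,db$ the velocity field attached to $\tilde Y=S_{\omega_1}[Y]$, I would start from the identity
\[ \frac{d}{dt}\big(\tilde X(a,t)-\tilde Y(a,t)\big)=\underbrace{\big(v_X(\tilde X(a,t))-v_X(\tilde Y(a,t))\big)}_{\mathrm A}+\underbrace{\big(v_X(\tilde Y(a,t))-w_Y(\tilde Y(a,t))\big)}_{\mathrm B}, \]
and split $\mathrm B=\mathrm I+\mathrm{II}$ by inserting $\pm\int_\Omega K(\tilde Y(a,t),Y(b))\omega_0(b)\,db$, so that $\mathrm I=\int_\Omega[K(\tilde Y(a,t),X(b))-K(\tilde Y(a,t),Y(b))]\omega_0(b)\,db$ and $\mathrm{II}=\int_\Omega K(\tilde Y(a,t),Y(b))(\omega_0-\omega_1)(b)\,db$. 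I expect these to generate, respectively, the $\mu_{\Theta}(d(\tilde X,\tilde Y))$, $\mu_{\Theta}(d(X,Y))$, and $\|\omega_0-\omega_1\|_{L^1\cap L^p}$ contributions.

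Term $\mathrm A$ is verbatim the computation of Lemma \ref{Xregular}: the evaluation points $\tilde X(a,t),\tilde Y(a,t)$ differ by exactly the quantity to be controlled, so taking $M=\mathfrak{C}_0|\tilde X(a,t)-\tilde Y(a,t)|$ and using the first-slot estimate \eqref{yfar} I obtain $|\mathrm A|\le C\|\omega_0\|_{L^1\cap\mathbf{Y}_{\Theta}}\mu_{\Theta}(|\tilde X(a,t)-\tilde Y(a,t)|)$. For term $\mathrm I$ the kernel difference sits in the second slot, with $|X(b)-Y(b)|\le d(X,Y)$ for every $b$; I would set $M=\mathfrak{C}_0\,d(X,Y)$ and repeat the geometric case analysis of Lemma \ref{Xregular} with the symmetric estimate \eqref{xfar}. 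Splitting at $|\tilde Y(a,t)-X(b)|\ge M$ (where \eqref{xfar} applies and gives the smooth $M|\cdot|^{-2}$ bound, since $|X(b)-Y(b)|\le M\mathfrak{C}_0^{-1}$) versus $<M$ (where each kernel is bounded by $C|\cdot|^{-1}$ and $Y(b)$ lies within $(1+\mathfrak{C}_0^{-1})M$ of $\tilde Y(a,t)$) reproduces exactly the four integrand types appearing there; Lemma \ref{pointwise}, applied with the measure-preserving maps $b\mapsto \tilde Y(a,t)-X(b)$ and $b\mapsto\tilde Y(a,t)-Y(b)$, then yields $|\mathrm I|\le C\|\omega_0\|_{L^1\cap\mathbf{Y}_{\Theta}}\mu_{\Theta}(M\mathfrak{C}_0^{-1})=C\|\omega_0\|_{L^1\cap\mathbf{Y}_{\Theta}}\mu_{\Theta}(d(X,Y))$.

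For term $\mathrm{II}$ I would change variables $x=Y(b)$ using that $Y(t)\in\mathcal{M}$ is measure-preserving, rewriting it as $\int_\Omega K(\tilde Y(a,t),x)\,(\omega_0-\omega_1)\circ Y(t)^{-1}(x)\,dx$; bounding $|K|\le C|\cdot|^{-1}$, splitting at $|\tilde Y(a,t)-x|=1$, and applying H\"older with $p>2$ (so $p'<2$ and $|\cdot|^{-1}$ is locally $L^{p'}$ in the plane) gives $|\mathrm{II}|\le C_p\|\omega_0-\omega_1\|_{L^1\cap L^p}$, exactly as in the well-posedness discussion preceding Lemma \ref{timeconti}. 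Collecting the three bounds, using $\tfrac{d}{dt}|\tilde X(a,t)-\tilde Y(a,t)|\le|\tfrac{d}{dt}(\tilde X(a,t)-\tilde Y(a,t))|$, replacing $\mu_{\Theta}(|\tilde X(a,t)-\tilde Y(a,t)|)$ by $\mu_{\Theta}(d(\tilde X(t),\tilde Y(t)))$ through monotonicity of $\mu_{\Theta}$ to remove the $a$-dependence, and taking the supremum over $a$ produces the stated inequality.

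The main obstacle is term $\mathrm I$: unlike in Lemma \ref{Xregular}, the separation $|X(b)-Y(b)|$ is not a single fixed number but varies with $b$, so I must verify that replacing it uniformly by its bound $d(X,Y)$ and using the single threshold $M=\mathfrak{C}_0 d(X,Y)$ still produces integrands of precisely the form controlled by Lemma \ref{pointwise}. This works because each geometric bound is monotone in the separation and the far condition \eqref{xfar} follows from $|\tilde Y(a,t)-X(b)|\ge M\ge 2d(X,Y)\ge 2|X(b)-Y(b)|$. I would also record the routine $M\ge1$ bookkeeping (equivalently $d(X,Y)\ge\mathfrak{C}_0^{-1}$, and likewise $d(\tilde X,\tilde Y)\ge\mathfrak{C}_0^{-1}$ for term $\mathrm A$): there the $M|\cdot|^{-2}$ integrand is replaced by the constant $C_q\|\omega_0\|_{L^1\cap L^q}$ and, since $\mu_{\Theta}$ is constant on $[\mathfrak{C}_0^{-1},\infty)$, is absorbed into $C\|\omega_0\|_{L^1\cap\mathbf{Y}_{\Theta}}\mu_{\Theta}(d(X,Y))$.
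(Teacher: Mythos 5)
Your proposal is correct and follows essentially the same route as the paper's proof: the same telescoping insertion of $K(\tilde Y(a,t),X(b,t))$ and of $K(\tilde Y(a,t),Y(b,t))\omega_0$, the same far/near splitting at a threshold of the form $\mathfrak{C}_0\times(\text{distance})$, and the same reduction to Lemma \ref{Kernelestim} and Lemma \ref{pointwise}, including the absorption of the $M\ge 1$ case into the constant value of $\mu_{\Theta}$. The only difference is bookkeeping: the paper splits the integral at the single threshold $M=\mathfrak{C}_0\max(d(X,Y),d(\tilde X,\tilde Y))$ and telescopes only on the far region (bounding the raw kernels on the near region), whereas you telescope globally into $\mathrm{A}+\mathrm{I}+\mathrm{II}$ and use two separate thresholds — both orderings produce the identical integrand types and the identical bound.
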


\begin{proof}
We write $\tilde{X} = S_{\omega_0} [X], \ \tilde{Y} = S_{\omega_1} [Y]$ and 
\begin{equation}
M:= \mathfrak{C}_0 \max (\| X- Y \|_{L^\infty}, \| \tilde X - \tilde Y \|_{L^\infty } ). 
\end{equation}
For any $a \in \Omega$, 
\begin{equation}
\begin{split}
\frac{d}{dt} &(\tilde{X}(a,t) - \tilde Y(a,t) ) = \int_{\Omega} K(\tilde Y(a,t), Y(b,t) ) (\omega_0 (b) - \omega_1 (b) ) db \\
&+ \int_{|X(b,t) - \tilde X(a,t)| \ge M}  \big ( K(\tilde X(a,t), X(b,t))  - K(\tilde Y(a,t), X(b,t) ) \big ) \omega_0 (b) db \\
&+ \int_{|X(b,t) - \tilde X(a,t)| \ge M}  \big ( K(\tilde Y(a,t), X(b,t))  - K(\tilde Y(a,t), Y(b,t) ) \big ) \omega_0 (b) db \\
&+ \int_{|X(b,t) - \tilde X(a,t)| \le M } K(\tilde X(a,t), X(b,t)) \omega_0 (b) db-  \int_{|X(b,t) - \tilde \tilde X(a,t)| \le M} K(\tilde Y(a,t), Y(b,t) ) \omega_0 (b) db.
\end{split}
\end{equation}
Suppose first that $M<1$. If
\begin{equation} \label{abfar}
|X(b,t)-\tilde X(a,t)| \ge M,
\end{equation} 
then
\begin{equation*}
\begin{split}
|\tilde Y(a, t) - Y(b,t) | &\ge |\tilde X(a,t) - X(b,t) | - |\tilde X(a,t) - \tilde Y(a, t) | - |X(b,t) - Y(b,t) | \\
&\ge (1- 2 \mathfrak{C}_0^{-1} ) |\tilde X(a,t) - X(b,t) |  \ge M/2,  \\
|\tilde Y(a, t) - Y(b,t) | &\ge |\tilde X(a,t) - X(b,t) | - \| \tilde X - \tilde Y \|_{L^\infty } - \| X - Y \|_{L^\infty}  \ge \frac{1}{2} |X(b,t) - \tilde X(a,t) | \ge M/2 , \\
|\tilde Y(a,t) - X(b,t) | &\ge |\tilde X(a,t) - X(b,t) | - |\tilde X(a,t) - \tilde Y(a,t) | \ge \frac{1}{2} |X(b,t) - \tilde X(a,t) |  \ge M/2 \\
|\tilde Y(a,t) - X(b,t) | &\ge | \tilde Y(a,t) - Y (b,t) | - |X(b,t) -  Y(b,t) | \ge \frac{1}{2} |Y(b,t) - \tilde Y (a,t) |, 
\end{split}
\end{equation*}
so by the first two inequalities, \eqref{yfar} is satisfied with $x_1 = X(a,t)$, $x_2 = Y(a,t)$, $y = X(b,t)$, and also \eqref{xfar} is satisfied with $x = Y(a,t)$, $y_1 = X(b,t)$, and $y_2 = Y(b,t)$. Thus, for $a, b$ satisfying \eqref{abfar} we have
\begin{equation*}
\begin{split}
\left | K(\tilde X(a,t), X(b,t) ) - K(\tilde Y(a,t), X(b,t) ) \right | &\le  C \frac{|\tilde X(a,t) - \tilde Y(a,t) | }{|\tilde X(a,t) -X(b,t) |^2}  \le C \frac{ M } {|\tilde X(a,t) - X(b,t) |^2} , \\
\left | K(\tilde Y(a,t), X(b,t) ) - K(\tilde Y(a,t), Y(b,t) ) \right | &\le C \frac{|X(b,t) - Y(b,t) | }{ |\tilde Y(a,t) - Y(b,t) | ^2 } \le C \frac{M }{|\tilde{X}(a,t) - X(b,t) |^2}.
\end{split}
\end{equation*}
On the other hand, if
\begin{equation}\label{abclose}
|X(b,t) -\tilde X(a,t) | \le M,  
\end{equation}
then we have
\begin{equation}
\begin{split}\nonumber
|\tilde Y(a, t) - Y(b,t) | &\le |\tilde X(a,t) -  X(b,t) | +|\tilde X (a,t) - \tilde Y(a, t) | + | X(b,t) - Y(b,t) | \le (1+2 \mathfrak{C}_0 ^{-1} ) M.
\end{split}
\end{equation}
Let 
\begin{equation}
F_{a} (b) = K(\tilde{X} (a,t), X(b,t) ) - K(\tilde{Y} (a,t) , Y(b,t) ).
\end{equation}
We have
\begin{equation}
\begin{split}
|F_{a} (b) | &\le   \frac{CM \mathbf{1}_{|X(b,t) - \tilde{X}(a,t) | \ge 1 } (b)}{|\tilde{X} (a,t) - X(b,t) |^2}  + \frac{C M  \mathbf{1}_{1 \ge |X(b,t) - \tilde{X}(a,t) | \ge M } (b) }{|\tilde{X} (a,t) - X(b,t) |^2} \\
&\qquad +  \frac{C \mathbf{1}_{|X(b,t) - \tilde{X}(a,t) | \le M } (b)}{|\tilde{X}(a,t) - X(b,t) | } + \frac{ C \mathbf{1}_{|\tilde{Y}(a,t) - Y(b,t) | \le (1+2 \mathfrak{C}_0^{-1} ) M } (b)}{|\tilde{Y}(a,t) - Y(b,t) | }, \\
\end{split}
\end{equation}
and if $M \ge 1$, the first term is replaced by
\begin{equation}
 \frac{C \mathbf{1}_{|X(b,t) - \tilde{X} (a,t) | \ge 1} (b)}{|\tilde{X}(a,t) - X(b,t) |} + \frac{C}{|\tilde{Y}(a,t) - Y(b,t) |},
\end{equation}
and by Lemma \ref{pointwise} we obtain the desired estimate.
\end{proof}

Putting $\omega_0 = \omega_1$, we apply Lemma \ref{Picard} for each $\omega_0 \in L^1 \cap \mathbf{Y}_{\Theta} $ we have unique fixed point $X = S_{\omega_0} [X]$. Now let $X = S_{\omega_0} [X], Y = S_{\omega_1} [Y]$. Then by Osgood lemma, we have the following dependence on initial data.
\begin{theorem} \label{theorem1}
For every $\omega_0 \in L^1 \cap \mathbf{Y}_{\Theta} $  there exists a unique weak solution of Euler in $\omega(t) \in L^1 \cap \mathbf{Y}_{\Theta} $. Moreover, for every $p > 2$, there exists a universal constant $C>0$ such that
\begin{equation}
\nu_{\Theta} (d(X(t), Y(t) ) ) \le \exp (Ct \| \omega_0 \|_{L^1 \cap \mathbf{Y}_{\Theta} } ) \nu_{\Theta} (C\| \omega_0 - \omega_1 \|_{L^1 \cap L^p }  ).
\end{equation}
\end{theorem}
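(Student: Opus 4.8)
The statement bundles two assertions—existence and uniqueness of the Lagrangian flow (equivalently of the weak solution), and the quantitative stability estimate—both of which I would obtain by assembling the three structural results already in hand: the quasi-Lipschitz bound of the last lemma, the fixed-point Lemma~\ref{Picard}, and the Osgood Lemma~\ref{Osgood}. The analytic work is finished; what remains is to combine these correctly.

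For existence and uniqueness I would specialize the quasi-Lipschitz lemma to $\omega_0 = \omega_1$. Integrating the resulting differential inequality from $t=0$ and using that $S_{\omega_0}[X](0) = S_{\omega_0}[Y](0) = \mathrm{id}$, so the boundary term $d(S[X](0),S[Y](0))$ vanishes, yields precisely the hypothesis \eqref{Quasi_Lipschitz} of Lemma~\ref{Picard} on the complete metric space $(\mathcal{M},d)$ with Osgood modulus $\mu_{\Theta}$. That $\mu_{\Theta}$ is admissible was already checked, and that $S$ maps $C([0,T];\mathcal{M})$ into itself is the content of Lemmas~\ref{timeconti}, \ref{volpres} and \ref{Xregular}. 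Lemma~\ref{Picard} then produces a unique fixed point $X = S_{\omega_0}[X]$ with $X(0)=\mathrm{id}$, which is the sought Lagrangian flow. The transported field $\omega(t) = \omega_0\circ X_t^{-1}$ is the weak Euler solution, and since each $X_t$ is measure-preserving one has $\|\omega(t)\|_{L^p} = \|\omega_0\|_{L^p}$ for all $p$, so $\omega(t)\in L^1\cap\mathbf{Y}_{\Theta}$ with unchanged norm; uniqueness of the weak solution is inherited from uniqueness of the fixed point.

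For the stability bound I would instead apply the quasi-Lipschitz lemma to the two genuine flows $X = S_{\omega_0}[X]$ and $Y = S_{\omega_1}[Y]$. Because these are fixed points, $\tilde X = X$ and $\tilde Y = Y$, so the two modulus terms coincide and, writing $\rho(t) := d(X(t),Y(t))$, the inequality collapses to
\begin{equation*}
\frac{d}{dt}\rho(t) \le C\|\omega_0-\omega_1\|_{L^1\cap L^p} + 2C\|\omega_0\|_{L^1\cap\mathbf{Y}_{\Theta}}\,\mu_{\Theta}(\rho(t)), \qquad \rho(0)=0.
\end{equation*}
Integrating gives $\rho(t) \le c(1+t) + \int_0^t \gamma\,\mu_{\Theta}(\rho(s))\,ds$ with $c = C\|\omega_0-\omega_1\|_{L^1\cap L^p}$ and $\gamma = 2C\|\omega_0\|_{L^1\cap\mathbf{Y}_{\Theta}}$, exactly the form required by Lemma~\ref{Osgood}. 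Its conclusion $-\mathcal{M}_{\Theta}(\rho(t)) + \mathcal{M}_{\Theta}(c) \le \gamma t$, after exponentiating and recalling $\nu_{\Theta} = \exp(-\mathcal{M}_{\Theta})$, becomes $\nu_{\Theta}(\rho(t)) \le e^{\gamma t}\nu_{\Theta}(c)$, which is the claimed estimate once the factor $2$ is absorbed into $C$. The degenerate case $c=0$ reproduces uniqueness, and the bound holds trivially there since $\nu_{\Theta}(0)=0$.

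The routine assembly above rests on one genuinely substantive point that I expect to be the main obstacle: the identification of the Lagrangian fixed point of $S_{\omega_0}$ with weak solutions of \eqref{vorteqn1}--\eqref{vorteqn4}. Concretely, one must check that every weak solution induces, through its Osgood-continuous velocity $K_{\Omega}[\omega]$, a flow solving the same fixed-point equation (using Osgood uniqueness for the associated transport equation to conclude $\omega(t)=\omega_0\circ X_t^{-1}$), and conversely that $\omega_0\circ X_t^{-1}$ satisfies the weak formulation; only then does uniqueness of the fixed point translate into uniqueness of the \emph{weak solution}. A secondary bookkeeping issue is to confirm that the threshold $\mu_{\Theta}(\mathfrak{C}_0^{-1})$ defining $\mathcal{M}_{\Theta}$ in \eqref{Mmod} plays the role of the upper endpoint $a$ in Lemma~\ref{Osgood}, so that the function appearing in the conclusion is literally the $\nu_{\Theta}$ of \eqref{numod}; the remaining analytic weight already lives in the kernel estimates of Lemma~\ref{Kernelestim} and the pointwise bounds of Lemma~\ref{pointwise}.
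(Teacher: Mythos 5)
Your proposal is correct and follows essentially the same route as the paper, which likewise obtains existence and uniqueness by applying Lemma~\ref{Picard} to the quasi-Lipschitz estimate with $\omega_0=\omega_1$, and then derives the stability bound by applying the same estimate to the two fixed points $X=S_{\omega_0}[X]$, $Y=S_{\omega_1}[Y]$ together with the Osgood Lemma~\ref{Osgood}. Your expanded bookkeeping (collapsing $\tilde X = X$, $\tilde Y = Y$, exponentiating to get $\nu_\Theta$) and your flag about identifying the Lagrangian fixed point with the weak Eulerian solution are faithful elaborations of steps the paper leaves implicit.
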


\subsection{Comparison between two domains}
Let $\Omega_1, \Omega_2$ be two bounded, simply connected domains in $\mathbb{R}^2$ with smooth boundaries, and let $\Phi: \Omega_1 \rightarrow \Omega_2$ be a measure-preserving diffeomorphism between two. Also we denote 
$K_1 = \nabla^\perp \Delta_1^{-1}, \qquad K_2 = \nabla^\perp \Delta_2 ^{-1}$, where $\Delta_i, i=1, 2 $ are Dirichlet Laplacian on $\Omega_i$. Also, we denote $G_i^{-1}$ be Green's function for Dirichlet Laplacian on $\Omega_i$. We have
\begin{equation*}
\frac{d}{dt} X_i (a, t) = \int_{\Omega_i} K_i (X_i (a, t), X_i (b,t) ) \omega_0 ^i (b) db, i=1, 2.
\end{equation*}
Now we consider $\tilde{X}_1 = \Phi^{-1} \circ X_2 \circ \Phi$: assuming $\omega_0 ^1 = \omega_0^2 \circ \Phi = \omega_0$, we have
\begin{equation}
\begin{split}
&\frac{d}{dt} (\tilde{X}_1  - X_1) (a,t) \\
&= \int_{\Omega_1} \left (K_2 ( X_2 ( \Phi(a), t) , X_2 (\Phi(b), t) ) \cdot \nabla \Phi^{-1} (X_2 (\Phi (a), t) ) - K_1 (X_1 (a,t) , X_1 (b,t) ) \right ) \omega_0 (b) db \\
&= \int_{\Omega_1} \left [ K_2 ( \Phi \circ \tilde{X}_1 (a), \Phi \circ \tilde{X}_1 (b) ) \cdot (\nabla \Phi^{-1}) \circ \Phi (\tilde{X}_1 (a) ) - K_1 (X_1(a, t) , X_1 (b,t) )  \right ] \omega_0 (b) db.
\end{split}
\end{equation}
Next, remark that $K_1(x,y) = \nabla_x^{\perp} (G_1 (x,y) )$ and 
\begin{equation*}
K_2 (\Phi(x), \Phi(y) ) \cdot (\nabla \Phi^{-1} ) \circ \Phi (x) = \nabla_x^\perp (G_2 (\Phi(x), \Phi(y) ) ).
\end{equation*}
Therefore, we have
\begin{equation}
\begin{split}
\frac{d}{dt} (X_1 - \tilde{X}_1 )(a,t) &= \int_{\Omega_1} [ K_1 (X_1 (a,t), y) - K_1 (\tilde{X}_1 (a,t) , y)  ] \omega_0 \circ X_1^{-1} (y) dy\\
&+ \int_{\Omega_1} \left (K_1 (\tilde{X}_1 (a,t), X_1 (b,t) ) - K_1 (\tilde{X}_1 (a,t), \tilde{X}_1 (b,t) ) \right )  \omega_0 (b) db \\
&+ \left. \nabla_x^{\perp} \left [ \int_{\Omega_1} \left (G_1 (x,y)  - G_2 (\Phi(x), \Phi(y) ) \right ) \omega_0 \circ \tilde{X}_1^{-1} (y)    dy \right ] \right |_{x =\tilde{X}_1 (a,t) }.
\end{split}
\end{equation}
The first and second terms are bounded by, as before, $C \| \omega_0 \|_{L^1 \cap \mathbf{Y}_{\Theta} } \mu_{\Theta} (| (X_1 - \tilde{X}_1 (a,t) | )$. To investigate the last term, we first note that if we write
\begin{equation}
F(x) = \int_{\Omega_1} G_2 (\Phi (x), \Phi (x') ) G(x') dx', 
\end{equation}
then $f = F \circ \Phi^{-1}, g = G \circ \Phi^{-1}$ solves 
\begin{equation*}
\Delta f(y) = g(y), y \in \Omega_2, \qquad f(y) = 0, y \in \partial \Omega_2.
\end{equation*}
By the chain rule, $\partial_{y_i} (F\circ \Phi^{-1} ) = (\partial_{y_i} \Phi^{-1} _j ) (\partial_{x_j} F) \circ \Phi^{-1} = \left [\partial_{x_j}^\perp \Phi_i^{\perp} (\partial_{x_j } F) \right ] \circ \Phi^{-1},$ and we have the following elliptic equation for $(F, G)$:
\begin{equation}
\left \{
\begin{split}
LF (x) := &\partial_{k} ( \partial_k^\perp \Phi_i \partial_j^\perp \Phi_i \partial_j F) (x) = G (x), \qquad x \in \Omega_1, \\
F(x) &= 0, x \in \partial \Omega_1.
\end{split} \right.
\end{equation}
Now we denote $\psi (x) = \int_{\Omega_1} G_1 (x,y) \omega_0 \circ \tilde{X}_1 ^{-1} (y) dy$, $\tilde{\psi} (x) = \int_{\Omega_1} G_2 (\Phi (x), \Phi(y) ) \omega_0 \circ \tilde{X}_1 ^{-1} (y) dy$. Then the last term is $\left.\nabla_x^\perp (\psi - \tilde{\psi} )(x)  \right |_{x = \tilde{X}_1 (a,t) }$, and $\psi - \tilde{\psi}$ solves the following:
\begin{equation}
\left \{
\begin{split}
\Delta (\psi - \tilde{\psi} )(x)  &= (L - \Delta) \tilde{\psi} = \partial_k \left [ (\partial_k^\perp \Phi_i \partial_j^\perp \Phi_i ) - \delta_{jk} \right ] \partial_j \tilde{\psi} (x),  \qquad x \in \Omega_1 \\
(\psi - \tilde{\psi}) (x) &= 0, x \in \partial \Omega_1.
\end{split} \right.
\end{equation}
Therefore, the last term is $\nabla^\perp \Delta_1 ^{-1} (L- \Delta) \tilde{\psi} = \nabla^\perp \Delta_1 ^{-1} (L- \Delta) L^{-1} \omega_0 \circ \tilde{X}_1 ^{-1} $, and we have, for any $q>2$, 
\begin{equation*}
\| \nabla \Delta_1 ^{-1} (L - \Delta) L^{-1} \omega_0 \circ \tilde{X}_1 ^{-1}  \|_{L^\infty} \le C_q \| (L - \Delta) L^{-1} \omega_0 \circ \tilde{X}_1 ^{-1} \|_{L^q} \le C_q  \| \nabla \Phi - \mathbb{I} \|_{C^1} C(\| \nabla \Phi \|_{C^1} ) \| \omega_0 \|_{L^q}
\end{equation*}
by elliptic regularity theory. Thus, we have
\begin{theorem} \label{theorem2}
$\frac{d}{dt} d( X_1 , \tilde{X}_1 ) \le C \| \omega_0 \|_{L^1 \cap \mathbf{Y}_{\Theta} } \mu_{\Theta} (d( X_1 , \tilde{X}_1 ) ) + C_q  \| \nabla \Phi - \mathbb{I} \|_{C^1} C(\| \nabla \Phi \|_{C^1} ) \| \omega_0 \|_{L^q}$.
\end{theorem}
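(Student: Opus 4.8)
The plan is to estimate the growth of $d(X_1,\tilde X_1)=\|X_1-\tilde X_1\|_{L^\infty}$, where $\tilde X_1=\Phi^{-1}\circ X_2\circ\Phi$ is the pullback to $\Omega_1$ of the $\Omega_2$-flow. The guiding idea is that $\tilde X_1$ satisfies \emph{almost} the same Lagrangian ODE as $X_1$ on $\Omega_1$, the discrepancy being entirely attributable to the mismatch between the two Green's functions $G_1$ and $G_2\circ(\Phi\times\Phi)$. So I would first differentiate $X_1-\tilde X_1$ in time, as in the computation preceding the statement, and organize the result into a transport part plus a domain-error part.

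The crucial algebraic step is to rewrite the velocity driving $\tilde X_1$. Since $K_i=\nabla^\perp G_i$ and $\Phi$ is measure-preserving (so $\det\nabla\Phi=1$ and the cofactor identity $\partial_{y_i}\Phi^{-1}_j=(\partial_{x_j}^\perp\Phi_i^\perp)\circ\Phi^{-1}$ holds), one has $K_2(\Phi(x),\Phi(y))\cdot(\nabla\Phi^{-1})\circ\Phi(x)=\nabla_x^\perp\big(G_2(\Phi(x),\Phi(y))\big)$. This converts the domain mismatch into a comparison of two stream functions with the \emph{same} source $\omega_0\circ\tilde X_1^{-1}$, namely $\psi(x)=\int_{\Omega_1}G_1(x,y)\,\omega_0\circ\tilde X_1^{-1}(y)\,dy$ and $\tilde\psi(x)=\int_{\Omega_1}G_2(\Phi(x),\Phi(y))\,\omega_0\circ\tilde X_1^{-1}(y)\,dy$. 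After this substitution $\tfrac{d}{dt}(X_1-\tilde X_1)$ splits into (i) two Biot--Savart-difference integrals that are \emph{structurally identical} to those handled in Lemma \ref{Xregular}, hence bounded by $C\|\omega_0\|_{L^1\cap\mathbf Y_\Theta}\,\mu_\Theta(|X_1-\tilde X_1|)$ after invoking Lemma \ref{pointwise}, and (ii) the single domain term $\nabla^\perp(\psi-\tilde\psi)$ evaluated at $\tilde X_1(a,t)$.

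The heart of the argument, and the step I expect to be the main obstacle, is the elliptic estimate for the domain term. The pulled-back $\tilde\psi$ solves a variable-coefficient equation $L\tilde\psi=\omega_0\circ\tilde X_1^{-1}$, where $L$ is the Laplacian conjugated by $\Phi$, written in divergence form with coefficients $\partial_k^\perp\Phi_i\,\partial_j^\perp\Phi_i$ that collapse to $\delta_{jk}$ when $\Phi=\mathrm{id}$. Hence $\psi-\tilde\psi$ solves the Poisson problem $\Delta_1(\psi-\tilde\psi)=(L-\Delta)\tilde\psi=(L-\Delta)L^{-1}\big(\omega_0\circ\tilde X_1^{-1}\big)$ with zero Dirichlet data, and the domain term equals $\nabla^\perp\Delta_1^{-1}(L-\Delta)L^{-1}(\omega_0\circ\tilde X_1^{-1})$. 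I would bound its $L^\infty$ norm by chaining three facts: the Sobolev embedding $W^{1,q}\hookrightarrow L^\infty$ for $q>2$ trades a derivative for $\|(L-\Delta)L^{-1}(\omega_0\circ\tilde X_1^{-1})\|_{L^q}$; $W^{2,q}$ elliptic regularity for $L^{-1}$ (with constant controlled by the ellipticity, i.e.\ by $\|\nabla\Phi\|_{C^1}$) together with measure-preservation $\|\omega_0\circ\tilde X_1^{-1}\|_{L^q}=\|\omega_0\|_{L^q}$ controls $\|\tilde\psi\|_{W^{2,q}}$; and finally the coefficients of $L-\Delta$, being $\partial_k^\perp\Phi_i\,\partial_j^\perp\Phi_i-\delta_{jk}$, vanish linearly as $\Phi\to\mathrm{id}$, so that $\|(L-\Delta)L^{-1}f\|_{L^q}\lesssim\|\nabla\Phi-\mathbb{I}\|_{C^1}\,C(\|\nabla\Phi\|_{C^1})\,\|f\|_{L^q}$. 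The delicate point is keeping these elliptic constants uniform along the flow and verifying that the \emph{divergence form} of $L-\Delta$ is precisely what allows the outer derivative to land on a $W^{2,q}$ function, so that the whole error stays at the level of $\|\omega_0\|_{L^q}$ rather than demanding extra regularity of $\omega_0$; the $C^1$ norm of the coefficients then produces exactly the advertised factor $\|\nabla\Phi-\mathbb{I}\|_{C^1}C(\|\nabla\Phi\|_{C^1})$.

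Combining (i) and (ii) and taking the supremum over $a\in\Omega_1$ yields the claimed differential inequality. As in the proofs of Lemmas \ref{timeconti} and \ref{Xregular}, feeding this into Osgood's lemma (Lemma \ref{Osgood}) and inverting $\nu_\Theta$ then converts it into the stated deteriorating-modulus continuity of the solution map in its dependence on the domain.
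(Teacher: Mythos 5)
Your proposal is correct and follows essentially the same route as the paper: the same decomposition of $\tfrac{d}{dt}(X_1-\tilde X_1)$ into two Biot--Savart difference terms handled by Lemma \ref{pointwise} plus the Green's-function mismatch, the same cofactor identity $K_2(\Phi(x),\Phi(y))\cdot(\nabla\Phi^{-1})\circ\Phi(x)=\nabla_x^\perp\bigl(G_2(\Phi(x),\Phi(y))\bigr)$, and the same elliptic reformulation $\nabla^\perp\Delta_1^{-1}(L-\Delta)L^{-1}(\omega_0\circ\tilde X_1^{-1})$ with the $L^q$-based estimate. Your spelled-out chain (Sobolev embedding, $W^{2,q}$ regularity for $L^{-1}$, linear vanishing of the coefficients of $L-\Delta$) is just a more explicit rendering of what the paper compresses into ``by elliptic regularity theory.''
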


\vspace{2mm}

\noindent \textbf{Acknowledgements.}  We are grateful to Peter Constantin for his guidance and mentorship over the years.  The work of TDD was partially supported by the NSF CAREER award \#2235395, a Stony Brook University Trustee’s award as well as an Alfred P. Sloan Fellowship.

\vspace{-2mm}

\qquad\\
Theodore D. Drivas\\
Department of Mathematics\\
Stony Brook University, Stony Brook NY 11790, USA\\
{\tt tdrivas@math.stonybrook.edu}

\qquad\\
Joonhyun La\\
School of Mathematics\\
Korean Institute for Advanced Study, Korea\\
{\tt joonhyun@kias.re.kr}

\end{document}